\newtheorem*{rep@theorem}{\rep@title}
\newcommand{\newreptheorem}[2]{%
\newenvironment{rep#1}[1]{%
 \def\rep@title{#2~\ref{##1}}%
 \begin{rep@theorem}}%
 {\end{rep@theorem}}}
\theoremstyle{plain}
\newtheorem{theorem}{Theorem}[section]
\newtheorem{thm}[theorem]{Theorem}
\newtheorem{lem}[theorem]{Lemma}
\newtheorem{prop}[theorem]{Proposition}
\newtheorem{cor}[theorem]{Corollary}
\newtheorem{conj}[theorem]{Conjecture}
\theoremstyle{definition}
\theoremstyle{definition}
\newtheorem{defn}{Definition}
\theoremstyle{remark}
\numberwithin{equation}{section}
\def\subsubsection{\@startsection{subsubsection}{3}%
  \z@{.5\linespacing\@plus.7\linespacing}{.1\linespacing}%
  {\normalfont\itshape}}
\newcommand{\C}{\mathbb{C}}
\newcommand{\NN}{\mathbb{N}}
\newcommand{\s}{{\mathsf{s}}}
\newcommand{\D}{{\mathsf{D}}}
\begin{document}

\title{Short Zero-Sum Sequences Over Abelian $p$-Groups of Large Exponent}

\author{Sammy Luo}

\maketitle

\begin{abstract}
Let $G$ be a finite abelian group with exponent $n$. Let $\eta(G)$ denote
the smallest integer $\ell$ such that every sequence over $G$ of length
at least $\ell$ has a zero-sum subsequence of length at most $n$. We determine the precise value of $\eta(G)$ when $G$ is a $p$-group whose Davenport constant is at most $2n-1$. This confirms one of the equalities in a conjecture by Schmid and Zhuang from 2010.

\end{abstract}


\section{Introduction}

Throughout this paper, let $G$ be an additive finite abelian group. By the Fundamental Theorem of Finite Abelian Groups, we can uniquely write
$$G\cong C_{n_1} \oplus \cdots \oplus C_{n_r},$$
where $1<n_1\mid\cdots\mid n_r$. Here $r=r(G)$ is called the \emph{rank} of $G$, and $n_r=\exp(G)$ is called the \emph{exponent} of $G$. The exponent is, for a finite abelian group, the largest order attained by an element of $G$.

We consider sequences of elements of $G$. A \emph{sequence of length $k$ over $G$} is a tuple $S=(g_1,\dots,g_k)$ of $k$ elements of $G$, where repetition is allowed and the order of elements is disregarded. Algebraically, we can consider sequences over $G$ as elements of the free abelian monoid over $G$, a viewpoint that facilitates many algebraic applications. Accordingly, we can write sequences in multiplicative notation; for example, $S=g_1\cdot \ldots \cdot g_k$. We say that $S$ is a \emph{zero-sum sequence (ZSS)} if $\sum_{i=1}^k g_i=0$. A zero-sum sequence of length at most $\exp(G)$ is called a \emph{short zero-sum sequence}.

One important class of problems on zero-sum sequences asks for extremal conditions on a sequence $S$ guaranteeing the containment of a zero-sum subsequence with specified properties, such as having a length satisfying various constraints. The exploration of this class of problems centers on a family of invariants associated with a group $G$, called the \emph{zero-sum invariants} $\s_L(G)$.
\begin{defn}
    For $L\subseteq \NN^+$, let $\s_L(G)$ be the minimum length $\ell\in \NN^+$ such that every sequence $S$ over $G$ of length at least $\ell$ contains a zero-sum subsequence of some length $t\in L$. Let $\s_L(G)=\infty$ if such a length $\ell$ does not exist.    
\end{defn}

A few examples of zero-sum invariants are of particular interest.
\begin{itemize}
    \item $\s_{\NN^+}(G)=\D(G)$ is called the \emph{Davenport constant}.  
    \item $\s_{\{\exp(G)\}}(G)=\s(G)$ is called the \emph{Erd\H{o}s-Ginzburg-Ziv (EGZ) constant}. 
    \item $\s_{[1,\exp(G)]}(G)=\eta(G)$ is called the \emph{$\eta$-invariant}.
\end{itemize} 

The study of these invariants has applications ranging from non-unique factorizations \cite{nonunique, addgpNonunique} to zero-sum Ramsey theory \cite{zsramsey} and finite geometry \cite{zssBounds}.

The precise values of $\s(G)$ and $\eta(G)$ are known only for some very special types of groups $G$. For example, for groups of rank at most $2$, we have the following result.

\begin{thm}[Geroldinger, Halter-Koch, {\cite[Theorem~5.8.3]{nonunique}}]
\label{thm:rank2}
Let $G=C_{n_1}\oplus C_{n_2}$ with $1\leq n_1|n_2$. Then
\[\s(G)=2n_1+2n_2-3\text{ and }\eta(G)=2n_1+n_2-2.\]
\end{thm}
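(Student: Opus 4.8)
The plan is to establish matching lower and upper bounds, exploiting that the two invariants are tied together by one elementary inequality. Padding a sequence with $\exp(G)-1$ zeros is a reversible device: if $S$ has no short zero-sum subsequence then $S\cdot 0^{\exp(G)-1}$ has no zero-sum subsequence of length exactly $\exp(G)$, and conversely, if $|S|\ge\s(G)-\exp(G)+1$ then $S\cdot 0^{\exp(G)-1}$ has length at least $\s(G)$, hence a zero-sum subsequence of length $\exp(G)$, whose non-padded part is a short zero-sum subsequence of $S$. This gives $\eta(G)+\exp(G)-1\le\s(G)$ for every finite abelian group, and using it in its two equivalent forms reduces the theorem to: (i) exhibiting one sequence over $G$ of length $2n_1+n_2-3$ with no short zero-sum subsequence, and (ii) proving $\s(G)\le 2n_1+2n_2-3$.

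For (i) I would fix generators $e_1,e_2$ of $G$ of orders $n_1,n_2$ and take $S_0=e_1^{n_1-1}\cdot(e_1+e_2)^{n_1-1}\cdot e_2^{n_2-1}$, of length $2n_1+n_2-3$. A subsequence with $a$ copies of $e_1$, $b$ of $e_1+e_2$, and $c$ of $e_2$ has sum $(a+b)e_1+(b+c)e_2$, which vanishes iff $n_1\mid a+b$ and $n_2\mid b+c$; the ranges $0\le a+b\le 2n_1-2$ and $0\le b+c\le n_1+n_2-2<2n_2$ force $a+b\in\{0,n_1\}$ and $b+c\in\{0,n_2\}$, and a short case check rules out everything except the empty subsequence and subsequences with $a+b=n_1,\ b+c=n_2$, which have length $n_1+n_2>n_2$. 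So $\eta(G)\ge 2n_1+n_2-2$, and (from $S_0\cdot 0^{n_2-1}$, or from the inequality above) $\s(G)\ge 2n_1+2n_2-3$.

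For (ii) I would induct on $|G|=n_1n_2$, taking two statements as known external inputs: the Erd\H{o}s--Ginzburg--Ziv theorem $\s(C_{n_2})=2n_2-1$ (used when $n_1=1$), and Reiher's theorem $\s(C_p\oplus C_p)=4p-3$ settling the Kemnitz conjecture (used when $n_1=n_2=p$, and also inside every inductive step). When $n_1\ge 2$ and $(n_1,n_2)\ne(p,p)$, fix a prime $p\mid n_1$, let $H=pG\cong C_{n_1/p}\oplus C_{n_2/p}$ (so $G/H\cong C_p\oplus C_p$ and $\exp(H)=n_2/p$), and set $T=\s(H)=2(n_1/p)+2(n_2/p)-3$ by induction. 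Given $S$ over $G$ with $|S|=2n_1+2n_2-3$, pass to its termwise image $\bar S$ in $G/H\cong C_p\oplus C_p$; since $\s(C_p\oplus C_p)\le 4p-3$ we can repeatedly delete from $\bar S$ disjoint zero-sum subsequences of length exactly $p$, and the count is exactly right, since after $T-1$ deletions the remaining length is $|S|-p(T-1)=4p-3$, leaving room for one more. Lifting the $T$ deleted blocks to disjoint length-$p$ subsequences $B_1,\dots,B_T$ of $S$, with sums $\sigma_i\in H$, the length-$T$ sequence $\sigma_1\cdots\sigma_T$ over $H$ has (as $T=\s(H)$) a zero-sum subsequence of length exactly $\exp(H)=n_2/p$; the union of the matching $B_i$ is a zero-sum subsequence of $S$ of length $p\cdot(n_2/p)=n_2=\exp(G)$, completing the induction.

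I expect the real difficulty to lie entirely in the base case $\s(C_p\oplus C_p)\le 4p-3$. Chevalley--Warning applied to three degree-$(p-1)$ polynomials --- the two coordinates of the sum of a $\{0,1\}$-weighting of the $4p-3$ given vectors, and the weight modulo $p$ --- cheaply produces a nonempty zero-sum subsequence of length a multiple of $p$, hence $p$, $2p$, or $3p$; forcing this length down to $p$ is precisely the Kemnitz conjecture, and Reiher's proof, a substantial sharpening of the polynomial method, is where the work goes. Since the statement is quoted from \cite[Theorem~5.8.3]{nonunique}, in the present paper these inputs are cited rather than reproved.
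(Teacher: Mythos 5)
The paper only quotes this result from \cite[Theorem~5.8.3]{nonunique} and contains no proof of its own, so there is nothing internal to compare against; judged on its own, your argument is correct and is essentially the standard proof from the cited source: the extremal sequence $e_1^{n_1-1}(e_1+e_2)^{n_1-1}e_2^{n_2-1}$ for the lower bound, the zero-padding inequality $\eta(G)\le\s(G)-\exp(G)+1$ to tie the two invariants together, and the inductive method through the quotient $G/pG\cong C_p\oplus C_p$ with the block count coming out exactly to $4p-3$. You also correctly isolate the one genuinely deep ingredient, $\s(C_p\oplus C_p)=4p-3$ (Reiher's theorem), as an external input rather than something to reprove.
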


In other cases, some bounds on $\s(G)$ and $\eta(G)$ are known. Schmid and Zhuang \cite{conjSrc} obtained the following result for the case of a $p$-group with $\D(G)\leq 2\exp(G)-1$. We will call such a group a \emph{$p$-group with large exponent}.

\begin{thm}[Schmid, Zhuang, {\cite[Theorem~1.2]{conjSrc}}]
\label{thm:conjbound}
    Let $p$ be an odd prime and let $G$ be a finite abelian $p$-group with $\D(G)\leq 2\exp(G)-1$. Then
    $$2\D(G)-1\leq \eta(G)+\exp(G)-1\leq \s(G)\leq \D(G)+2\exp(G)-2.$$
\end{thm}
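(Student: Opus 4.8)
The plan is to treat the three inequalities of the chain separately. Throughout write $n=\exp(G)$. Since $G$ is a $p$-group, Olson's theorem gives $\D(G)=1+\sum_{i=1}^r(n_i-1)$; in particular, writing $G=G'\oplus C_n$ with $G'=C_{n_1}\oplus\cdots\oplus C_{n_{r-1}}$ (and $G'=\{0\}$ if $r=1$), one has $\D(G)=\D(G')+n-1$, so the hypothesis $\D(G)\le 2n-1$ reads $\D(G')\le n$. The two lower bounds come from explicit extremal sequences; the real content is the upper bound $\s(G)\le\D(G)+2n-2$.

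For the first inequality $2\D(G)-1\le\eta(G)+n-1$, equivalently $\eta(G)\ge 2\D(G)-n$, I would fix a zero-sum-free sequence $W=w_1\cdot\ldots\cdot w_{\D(G')-1}$ over $G'$ of maximal length and a generator $e$ of the $C_n$-summand, and take
\[ S=\Bigl(\prod_{j=1}^{\D(G')-1}(w_j,0)\Bigr)\cdot(0,e)^{[n-1]}\cdot\Bigl(\prod_{j=1}^{\D(G')-1}(w_j,e)\Bigr), \]
of length $2(\D(G')-1)+(n-1)=2\D(G)-n-1$. Every subsequence of $S$ has the form $\prod_{j\in I_1}(w_j,0)\cdot(0,e)^{[a]}\cdot\prod_{j\in I_2}(w_j,e)$ with $I_1,I_2\subseteq\{1,\dots,\D(G')-1\}$ and $0\le a\le n-1$, and its $C_n$-coordinate is $(a+|I_2|)e$; hence a zero-sum subsequence requires $n\mid a+|I_2|$, and if in addition its length $|I_1|+a+|I_2|$ is at most $n$, then $a+|I_2|\in\{0,n\}$. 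If $a+|I_2|=0$, then $a=0$ and $I_2=\emptyset$, and vanishing of the $G'$-coordinate forces $I_1=\emptyset$ since $W$ is zero-sum-free; if $a+|I_2|=n$, then $|I_1|=0$, and the $G'$-coordinate is a subsum over the nonempty set $I_2$ (nonempty since $a\le n-1$) that vanishes, contradicting zero-sum-freeness of $W$. So $S$ has no zero-sum subsequence of length in $[1,n]$, whence $\eta(G)>|S|$. For the second inequality $\eta(G)+n-1\le\s(G)$, take $U$ of length $\eta(G)-1$ with no zero-sum subsequence of length in $[1,n]$ and put $S=U\cdot 0^{[n-1]}$: a zero-sum subsequence of $S$ of length exactly $n$ is $U'\cdot 0^{[b]}$ with $U'$ a subsequence of $U$ and $1\le|U'|=n-b\le n$, i.e.\ a zero-sum subsequence of $U$ of length in $[1,n]$, which is impossible; hence $\s(G)>\eta(G)+n-2$.

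For the upper bound $\s(G)\le\D(G)+2n-2$, the case $r(G)\le 2$ is immediate from Theorem~\ref{thm:rank2}: writing $G=C_{n_1}\oplus C_n$ (with $n_1:=1$ if $r=1$), $\s(G)=2n_1+2n-3\le(n_1+n-1)+2n-2=\D(G)+2n-2$ because $n_1\le n$. In general, let $S$ have length $\D(G)+2n-2$. As $|S|\ge\D(G)+n-1=\s_{\exp(G)\NN}(G)$ (a classical identity), $S$ has a zero-sum subsequence of length a positive multiple of $n$; choose one, $T$, of minimal length $\ell n$. Since $\D(G)\le 2n-1$ forces $|S|\le 4n-3$ and $|T|\le|S|$, we have $\ell\in\{1,2,3\}$, and $\ell=1$ is exactly the desired conclusion. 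For $\ell\in\{2,3\}$, decompose $T$ into minimal zero-sum subsequences $U_1,\dots,U_m$, each of length at most $\D(G)\le 2n-1$; if some $|U_i|=n$ we are done, so suppose all $|U_i|\in[1,n-1]\cup[n+1,2n-1]$. By minimality of $\ell\ge 2$, $S$ has no zero-sum subsequence of length exactly $n$, hence no sub-collection of the $U_i$ has total length $n$ (its union would be one). The crux is now to derive a contradiction: one has minimal zero-sum subsequences of lengths in $[1,n-1]\cup[n+1,2n-1]$ summing to $2n$ or $3n$, no sub-collection summing to $n$, sitting inside $S$ alongside its remaining $\D(G)+n-2$ elements. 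The purely arithmetic statement is false (e.g.\ $n-1$ and $n+1$ sum to $2n$, with no sub-collection equal to $n$), so one must bring in the group structure — Olson's formula for $\D(G)$, the bound $\D(G)\le 2n-1$ (equivalently $\D(G')\le n$), and the $p$-group hypothesis (with $p$ odd) should together exclude the obstructing length patterns, in effect a rigidity statement for sequences having no zero-sum subsequence of length $n$, with the rank-$\le 2$ computation as the base case. This last case analysis is the step I expect to be the main obstacle.
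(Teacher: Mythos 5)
This statement is quoted background: the paper does not prove it, but cites it to Schmid--Zhuang, so there is no in-paper proof to compare against. Judged on its own terms, your treatment of the first two inequalities is correct. The construction $S=\prod_j(w_j,0)\cdot(0,e)^{[n-1]}\cdot\prod_j(w_j,e)$ is exactly the extremal sequence behind Lemma~\ref{lem:lowereta}, and with $\D(G')=\D(G)-n+1$ it gives $\eta(G)\geq 2\D(G)-n$ as needed; the padding argument $S=U\cdot 0^{[n-1]}$ for $\s(G)\geq\eta(G)+n-1$ is the standard one (Lemma~5.7.2 of \cite{nonunique}). Both check out.

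The genuine gap is the upper bound $\s(G)\leq\D(G)+2n-2$, which is the entire substance of the theorem, and which you explicitly leave open. Your sketched route --- take a zero-sum subsequence $T$ of length $\ell n$ with $\ell\in\{2,3\}$, factor it into minimal zero-sum subsequences, and hope the length pattern is arithmetically impossible --- cannot work as stated, for the reason you yourself identify: lengths such as $n-1$ and $n+1$ realize the forbidden pattern, so no contradiction follows from lengths alone. What is actually needed is the structural input that over a $p$-group with $\D(G)\leq 2\exp(G)-1$, every zero-sum sequence of length at least $\D(G)+1$ contains a \emph{short} zero-sum subsequence (Theorem~\ref{thm:prezssinzss}/Corollary~\ref{thm:zssinzss} in this paper, due to Fan--Gao--Wang--Zhong--Zhuang). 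Schmid and Zhuang's argument applies this to a zero-sum subsequence of length $2n$: if $S$ has no zero-sum subsequence of length $n$, the short zero-sum piece extracted from $T$ has length in $[1,n-1]$, its complement in $T$ is again a zero-sum sequence of length in $(n,2n)$, hence of length $>\D(G)$, and one iterates this exchange to force a length-$n$ zero-sum subsequence after all. Without that lemma (or an equivalent rigidity statement), your case analysis for $\ell\in\{2,3\}$ has no mechanism to close, and the bound $\s(G)\leq\D(G)+2n-2$ remains unproven. Note also that the hypothesis that $p$ is odd enters precisely through these structural results, not through any arithmetic of subsequence lengths.
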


Note that equality holds everywhere when $\D(G)=2\exp(G)-1$. Schmid and Zhuang conjectured that equality holds between the three leftmost expressions without this extra condition.

\begin{conj}[Schmid, Zhuang, {\cite[Conjecture~4.1]{conjSrc}}]
\label{conj:main}
    Let $G$ be a finite abelian $p$-group with $\D(G)\leq 2\exp(G)-1$. Then
    $$2\D(G)-1=\eta(G)+\exp(G)-1=\s(G).$$
\end{conj}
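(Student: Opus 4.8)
The plan is to prove the two inequalities in Conjecture~\ref{conj:main} that are not already supplied by Theorem~\ref{thm:conjbound}. Writing $n=\exp(G)$, that theorem gives the lower bounds $\eta(G)\ge 2\D(G)-n$ and $\s(G)\ge\eta(G)+n-1$, so it suffices to establish the matching upper bounds
\[
\eta(G)\le 2\D(G)-n\qquad\text{and}\qquad \s(G)\le 2\D(G)-1 .
\]
The first gives $2\D(G)-1=\eta(G)+n-1$, and then the second reads $\s(G)\le\eta(G)+n-1$, closing both chains. The engine throughout is the structural consequence of the hypothesis $\D(G)\le 2n-1$. Writing $G\cong C_{p^{a_1}}\oplus\cdots\oplus C_{p^{a_r}}$ with $a_1\le\cdots\le a_r$ and using the known value $\D(G)=1+\sum_{i=1}^{r}(p^{a_i}-1)$ of the Davenport constant of a $p$-group, the hypothesis forces $\sum_{i=1}^{r-1}(p^{a_i}-1)\le n-1$. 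Hence, setting $G'=C_{p^{a_1}}\oplus\cdots\oplus C_{p^{a_{r-1}}}$ so that $G=G'\oplus C_n$, I obtain the two facts $\D(G')\le n$ and $\D(G)=\D(G')+n-1$, both of which drive the arguments below; the whole proof is organized as an induction on the rank $r$, with the base case $r\le 2$ furnished by Theorem~\ref{thm:rank2}.

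For the bound on $\eta(G)$ I would argue by contradiction: suppose $S$ is a sequence over $G$ with $|S|=2\D(G)-n$ and no zero-sum subsequence of length in $[1,n]$. Decompose $S=T_1\cdots T_k\cdot R$ where each $T_i$ is a minimal (atomic) zero-sum subsequence and $R$ is zero-sum free, so $|R|\le\D(G)-1$; since $S$ has no short zero-sum subsequence, every atom satisfies $n+1\le|T_i|\le\D(G)$. The key first step is the counting estimate $k(n+1)\le\sum_i|T_i|\le|S|=2\D(G)-n\le 3n-2$, which forces $k\le 2$. This reduces the problem to two configurations—a single long atom with a zero-sum-free tail, and two long atoms with a tail—and in each I would study the partial sums of the atoms (pairwise distinct by minimality) together with the subset sums of $R$, projecting via $\pi\colon G\to G'$ and applying the inductive hypothesis on $G'$ (together with a Kneser/Davenport-type set-addition estimate, where $\D(G')\le n$ keeps the projected atoms under control) to exhibit a zero-sum subsequence of length at most $n$, contradicting the choice of $S$.

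For the bound on $\s(G)$—equivalently $\s(G)\le\eta(G)+n-1$—take $S$ with $|S|=2\D(G)-1=2\D(G')+2n-3$; the goal is a zero-sum subsequence of length exactly $n$. I would split according to how many terms of $S$ lie in the top factor $C_n=\ker\pi$. If at least $2n-1$ of them do, the Erd\H{o}s--Ginzburg--Ziv theorem $\s(C_n)=2n-1$ immediately yields $n$ of them summing to zero, a zero-sum subsequence of length exactly $n$. Otherwise more than $2\D(G')-2$ terms project to nonzero elements of $G'$—and note that $2\D(G')-1$ is exactly the value the conjecture predicts for $\s(G')$, which is what makes the induction plausible. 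From these terms I would extract minimal $\pi$-zero-sum blocks $B_1,\dots,B_m$ (each of length $\le\D(G')\le n$), record the residue $c_i\in C_n$ equal to the sum of the terms of $B_i$, and combine a supply of length-$1$ terms lying in $C_n$ with a suitable sub-family of the $B_i$ to assemble a zero-sum of total length exactly $n$. The step I expect to be the main obstacle is precisely this last combination: the Erd\H{o}s--Ginzburg--Ziv theorem controls the \emph{number} of blocks merged but not their total \emph{length}, so pinning the length to $n$ demands a delicate inductive argument in which $\D(G')\le n$ is used to absorb any length deficit inside the dominant factor $C_n$. Making this length-control step unconditional is exactly the content of the general relation $\s(G)=\eta(G)+\exp(G)-1$, and the large-exponent hypothesis on the $p$-group is what I expect to render it tractable here.
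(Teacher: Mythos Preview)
First, a framing point: the paper does \emph{not} prove Conjecture~\ref{conj:main} in full. It establishes only the first equality $2\D(G)-1=\eta(G)+\exp(G)-1$ (this is Theorem~\ref{thm:main}); the second equality $\s(G)=\eta(G)+\exp(G)-1$ is left open, with Section~5 offering only partial progress (Corollary~\ref{cor:etaimain}). So any comparison must be made against what the paper actually proves.

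For the $\eta$-equality, the paper's argument is short, complete, and quite different from yours. It introduces the auxiliary invariants $\zeta_i(G)=\s_{[i-n,0]+n\NN^+}(G)$, proves $\zeta_i(G)=\D(G)+i-1$ for $p$-groups using the known value $\s_{n\NN^+}(G)=\D(G)+n-1$, and then observes that a sequence of length $2\D(G)-n=\zeta_{\D(G)-n+1}(G)$ must contain a zero-sum subsequence of length either in $[1,n]$ or in $[\D(G)+1,2n-1]$; in the latter case Corollary~\ref{thm:zssinzss} supplies a short zero-sum subsequence inside it. No induction on rank, no projection, no Kneser-type input. Your plan, by contrast, has a structural gap beyond the unspecified ``Kneser/Davenport-type set-addition estimate'': the induction on rank presumes you can invoke the hypothesis for $G'=C_{p^{a_1}}\oplus\cdots\oplus C_{p^{a_{r-1}}}$, but $G'$ need not satisfy $\D(G')\le 2\exp(G')-1$. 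For example, if $G=C_p^{\,3}\oplus C_{p^k}$ with $k$ large (so that $G$ has large exponent), then $G'=C_p^{\,3}$ has $\D(G')=3p-2>2p-1=2\exp(G')-1$, and the inductive hypothesis is unavailable. Even granting some weaker use of $G'$, the phrase ``study the partial sums of the atoms \ldots\ to exhibit a zero-sum subsequence of length at most $n$'' is not yet a proof; the atom-count bound $k\le 2$ is correct, but nothing after it is pinned down.

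For the $\s$-equality, you yourself flag the obstacle: Erd\H{o}s--Ginzburg--Ziv controls the \emph{number} of blocks combined, not their total \emph{length}, and you give no mechanism for hitting length exactly $n$. That difficulty is real and is precisely why this half of the conjecture remains open in the paper as well; your sketch does not close it.
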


Gao, Han, and Zhang made progress on this conjecture in \cite{main}, proving the first equality under additional conditions on the rank and Davenport constant of $G$. Their result is stated here.

\begin{thm}[Gao, Han, Zhang, {\cite[Theorem~1.2]{main}}]
\label{thm:weakmain}
    Let $G$ be a finite abelian $p$-group with $\exp(G)=n$ and $\D(G)\leq 2n-1$. Write $G=C_{n}\oplus H$, and let $G'=C_{an}\oplus H$ for some $a\geq 1$. Then
    $$2\D(G')-1=\eta(G')+\exp(G')-1,$$
    provided that $p>2r(H)$ and $\left\lceil \frac{2\D(H)}{\exp(H)}\right\rceil$ is either even or at most $3$.
\end{thm}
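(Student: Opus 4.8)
The plan is to prove the two inequalities $\eta(G')\le an+2\D(H)-2$ and $\eta(G')\ge an+2\D(H)-2$ separately. Since $\exp(G')=an$ and $\D(G')=an+\D(H)-1$ (Olson's formula $\D=\D^{*}$ when $a$ is a power of $p$, and its inductive extension under enlarging the top cyclic factor in general; in particular $\D(H)=\D(G)-n+1\le n$), these two together are equivalent to the asserted identity $2\D(G')-1=\eta(G')+\exp(G')-1$. The lower bound needs neither $p>2r(H)$ nor the parity hypothesis; the upper bound is the whole content and is where both enter.

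For the lower bound I would write $H=C_{m_1}\oplus\cdots\oplus C_{m_{r-1}}$ with standard generators $e_1,\dots,e_{r-1}$, let $e_0$ generate $C_{an}$, and consider
$$S_0=e_0^{\,an-1}\cdot\prod_{j=1}^{r-1}e_j^{\,m_j-1}\cdot\prod_{j=1}^{r-1}(e_0+e_j)^{\,m_j-1},$$
a sequence of length $(an-1)+2\sum_j(m_j-1)=an+2\D(H)-3$. If a subsequence uses $x_0$ copies of $e_0$, $y_j$ copies of $e_j$, and $z_j$ copies of $e_0+e_j$, then being zero-sum forces $y_j+z_j\in\{0,m_j\}$ for every $j$ and (because $\D(H)\le an$ keeps $x_0+\sum_jz_j$ strictly below $2an$) $x_0+\sum_jz_j\in\{0,an\}$; a quick check then shows that the only zero-sum subsequence with $x_0+\sum_jz_j=0$ is the empty one, while any other has length $an+\sum_{j\,:\,y_j+z_j=m_j}y_j>an$. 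Hence $S_0$ has no zero-sum subsequence of length in $[1,an]$, so $\eta(G')\ge an+2\D(H)-2$. (Alternatively this is immediate from Theorem~\ref{thm:conjbound} applied to $G'$.)

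For the upper bound I would in fact aim at the stronger bound $\s(G')\le 2\D(G')-1$: together with the always-valid inequality $\s(K)\ge\eta(K)+\exp(K)-1$ (append $\exp(K)-1$ zeros to an $\eta$-extremal sequence over $K$) this gives $\eta(G')\le\s(G')-an+1\le an+2\D(H)-2$. So let $S$ be a sequence over $G'=C_{an}\oplus H$ of length $2\D(G')-1=(2an-1)+(2\D(H)-2)$, and seek a zero-sum subsequence of length exactly $an$. The strategy splits the two factors. For the cyclic factor $C_{an}$ one uses Erd\H{o}s--Ginzburg--Ziv-type extraction in this cyclic group (at the coarsest level $\s(C_{an})=2an-1$; more finely, a controlled extraction through the subgroup $C_n\le C_{an}$) to strip the surplus $(a-1)n$ of the length and concentrate the problem on a ``core'' portion of $S$ behaving like a sequence over $G=C_n\oplus H$; note that only $H$ --- not $G'$ --- needs to be a $p$-group for what follows, so $a$ may be arbitrary. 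For this core one runs the polynomial (Chevalley--Warning) method over $\F_p$: a candidate subsequence of length $\ell$ is encoded by indicator variables $x_1,\dots,x_\ell$ (using $x_i^2=x_i$ and, for the factors of $H$ of exponent $>p$, $p$-adic digit expansions), one imposes the $r(H)$ coordinate relations of $H$ together with a power-sum polynomial like $\sum_ix_i^{\,p-1}$ that forces $\ell\equiv 0\pmod p$; the hypothesis $p>2r(H)$ is exactly what keeps the total degree below the number of variables, so Chevalley--Warning yields a nontrivial common zero, i.e.\ a nonempty subsequence that is zero-sum in $H$ and whose length is a multiple of $p$ bounded by a small multiple of $p$.

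The main obstacle --- and the step I expect to be hardest --- is turning this into a zero-sum of length exactly $\exp(G')=an$. Chevalley--Warning does not rule out the over-length solutions (zero-sums of length $2p,3p,\dots$ rather than the one wanted), so one must pair up or splice the blocks it produces so that the lengths add up on the nose, while keeping everything compatible with the extraction on the cyclic side so that no length is lost between the two layers. This is precisely where the hypothesis ``$\lceil 2\D(H)/\exp(H)\rceil$ is even or at most $3$'' enters: it caps how many multiples of the relevant power of $p$ can occur and, in the even case, provides the cancelling pairing that eliminates the over-length solutions --- in the spirit of Reiher's handling of the $2p$- and $3p$-length zero-sums in the proof of $\s(C_p^2)=4p-3$ --- while the ``at most $3$'' alternative clears the remaining small-rank cases by direct inspection. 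The bulk of the work is in this bookkeeping, in making the cyclic reduction genuinely lossless against the polynomial step, and in the layered digit-by-digit version of the polynomial method needed to accommodate the factors of $H$ of exponent larger than $p$.
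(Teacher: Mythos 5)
This statement is quoted from Gao--Han--Zhang \cite{main}; the paper gives no proof of it, so your attempt can only be judged on its own terms. Your lower bound is fine: the sequence $e_0^{an-1}\cdot\prod_j e_j^{m_j-1}\cdot\prod_j(e_0+e_j)^{m_j-1}$ is exactly the standard construction behind Lemma~\ref{lem:lowereta}, and, as you note, that lemma (or Theorem~\ref{thm:conjbound}) gives $\eta(G')\geq an+2\D(H)-2$ immediately. The identification $\D(G')=an+\D(H)-1$ is also correct (via Theorem~\ref{thm:nearD} after splitting off the part of $a$ prime to $p$).

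The upper bound, however, has a genuine gap, and it is structural rather than a fixable detail. You propose to deduce $\eta(G')\leq 2\D(G')-\exp(G')$ from the stronger statement $\s(G')\leq 2\D(G')-1$. But that stronger statement is precisely the second, still-open half of Conjecture~\ref{conj:main}: even for $a=1$ and $G'$ a $p$-group of large exponent, the best known general bound is $\s(G')\leq\D(G')+2\exp(G')-2$ (Theorem~\ref{thm:conjbound}), which exceeds $2\D(G')-1$ whenever $\D(G')<2\exp(G')-1$, and Section~5 of this paper is devoted to the fact that closing that gap is unresolved. So you have reduced the theorem to a harder open problem. The sketch you then give of that harder problem --- EGZ extraction on the cyclic layer, Chevalley--Warning on a core over $H$, and a pairing/splicing step to force length exactly $an$ --- never materializes into an argument: you explicitly flag the length-exactly-$an$ step as the main obstacle and leave it unresolved, and the roles you assign to the hypotheses $p>2r(H)$ and to the parity of $\lceil 2\D(H)/\exp(H)\rceil$ are conjectural ("I expect", "in the spirit of Reiher") rather than demonstrated. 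The known route to Theorem~\ref{thm:weakmain} attacks $\eta$ directly (bounding the length needed to find a \emph{short} zero-sum subsequence, in the spirit of the $\zeta_i$/$\eta_i$ arguments of Sections~3 and~5 here), rather than passing through $\s(G')$; as written, your upper bound is not a proof.
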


Here taking $a=1$ yields the relevant progress towards Conjecture~\ref{conj:main}.

The second equality in the conjecture is a special case of a conjecture by Gao.

\begin{conj}[Gao, {\cite[Conjecture~2.3]{restrSize2}}]
\label{conj:etaEGZ}
    For any finite abelian group $G$, $$\eta(G)=\s(G)-\exp(G)+1.$$
\end{conj}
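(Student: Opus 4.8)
Writing $n=\exp(G)$, the asserted equality is equivalent to the two inequalities $\s(G)\ge \eta(G)+n-1$ and $\s(G)\le \eta(G)+n-1$, and the first of these holds for every finite abelian group by an explicit construction. By the definition of $\eta(G)$ there is a sequence $T$ over $G$ of length $\eta(G)-1$ with no zero-sum subsequence of length in $[1,n]$. Adjoin $n-1$ copies of the identity $0$ to obtain a sequence $S$ of length $\eta(G)+n-2$. Any subsequence of $S$ of length exactly $n$ must use at least one term of $T$, since only $n-1$ copies of $0$ are available; deleting the copies of $0$ it contains (which contribute nothing to the sum) would then leave a nonempty zero-sum subsequence of $T$ of length at most $n$, contradicting the choice of $T$. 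Hence $S$ has no zero-sum subsequence of length $n$, so $\s(G)>\eta(G)+n-2$. The entire content of the conjecture is therefore the reverse bound: every sequence over $G$ of length $\eta(G)+n-1$ contains a zero-sum subsequence of length exactly $n$.

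To attack this reverse bound I would use greedy extraction. Fix a sequence $S$ with $|S|=\eta(G)+n-1$ and repeatedly remove pairwise disjoint zero-sum subsequences of length in $[1,n]$; this is possible as long as the unremoved part has length at least $\eta(G)$. A maximal such family $B_1,\dots,B_k$ leaves a remainder of length at most $\eta(G)-1$, so $\sum_i|B_i|\ge(\eta(G)+n-1)-(\eta(G)-1)=n$. Because each $B_i$ has sum zero, any sub-union $\prod_{i\in I}B_i$ is again a zero-sum subsequence, of length $\sum_{i\in I}|B_i|$. It thus suffices to find an index set $I$ with $\sum_{i\in I}|B_i|=n$, which reduces the whole problem to a subset-sum question for the multiset of block lengths $|B_1|,\dots,|B_k|\in[1,n]$ whose total is at least $n$.

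This subset-sum question is where the principal difficulty resides, since such an $I$ need not exist: for instance two blocks of length $3$ yield sub-union lengths only in $\{0,3,6\}$, missing $n=4$. The resolution must therefore come from controlling which lengths of short zero-sum subsequences are attainable, and this is exactly the ingredient available only in structured cases. When $0$ or an element of order $n$ occurs with multiplicity at least $n$ one immediately obtains $n$ length-$1$ blocks; and for a $p$-group of large exponent the sandwich $2\D(G)-1\le\eta(G)+n-1\le\s(G)\le\D(G)+2n-2$ of Theorem~\ref{thm:conjbound}, together with an exact determination of $\eta(G)$, tightly constrains $\s(G)$. For the general conjecture I would attempt an induction on the rank through the decomposition $G=C_n\oplus H$, lifting length control from $H$ up to $G$. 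I expect the genuinely hard step to be establishing such a length-flexibility statement uniformly in $G$: without it, the extracted blocks can conspire to straddle the target length $n$, and it is precisely this gap that leaves the conjecture open in general.
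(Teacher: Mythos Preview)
The statement you are addressing is a \emph{conjecture} in the paper, and the paper does not purport to prove it; it merely records that the conjecture holds in all cases where both $\eta(G)$ and $\s(G)$ are known, and then goes on to prove only the first equality of the related Conjecture~\ref{conj:main}, leaving Gao's conjecture open even in the $p$-group setting. So there is no proof in the paper to compare your proposal against.

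Your treatment of the easy direction $\s(G)\ge\eta(G)+\exp(G)-1$ is correct and is exactly the standard argument; the paper simply cites this inequality as part of the chain $\D(G)\le\eta(G)\le\s(G)-\exp(G)+1\le|G|$ from \cite{nonunique}. For the reverse direction, your greedy block-extraction reduction to a subset-sum problem on the block lengths is a natural heuristic, and your counterexample (two blocks of length $3$ when $n=4$) correctly shows why it fails without further input. You then, appropriately, acknowledge that this is where the argument stalls and that the conjecture remains open. In short, what you have written is an accurate summary of the known easy half together with an honest identification of the obstruction, not a proof; and that is consistent with the paper, which also does not prove the statement.
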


This conjecture holds for every $G$ for which $\eta(G)$ and $\s(G)$ have been determined, and is also known to hold in a few other special cases (see Theorem~6.6 in \cite{zssSurvey}).

In this paper, we prove the first equality in Conjecture~\ref{conj:main} in general.

\begin{thm}
\label{thm:main}
    Let $G$ be a finite abelian $p$-group with $\D(G)\leq 2\exp(G)-1$. Then
    $$2\D(G)-1=\eta(G)+\exp(G)-1.$$ 
\end{thm}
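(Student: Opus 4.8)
Set $n=\exp(G)$. The claimed identity $2\D(G)-1=\eta(G)+n-1$ amounts to $\eta(G)=2\D(G)-n$, which I would prove by establishing the two inequalities $\eta(G)\ge 2\D(G)-n$ and $\eta(G)\le 2\D(G)-n$ separately. Write $G\cong C_{n_1}\oplus\cdots\oplus C_{n_r}$ with $n_1\mid\cdots\mid n_r=n$ and fix a basis $e_1,\dots,e_r$; recall Olson's theorem that for a $p$-group $\D(G)=1+\sum_{i=1}^{r}(n_i-1)$, so the hypothesis $\D(G)\le 2n-1$ is equivalent to $\sum_{i=1}^{r-1}(n_i-1)\le n-1$. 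For $g\in G$ and $k\ge 0$, write $g^{[k]}$ for the sequence consisting of $k$ copies of $g$.

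For the \emph{lower bound} I would exhibit the sequence
\[
 T=\Bigl(\prod_{i=1}^{r-1}e_i^{[n_i-1]}\Bigr)\cdot\Bigl(\prod_{i=1}^{r-1}(e_i+e_r)^{[n_i-1]}\Bigr)\cdot e_r^{[n-1]},
\]
which has length $2\sum_{i=1}^{r-1}(n_i-1)+(n-1)=2\D(G)-n-1$, and verify it has no nonempty zero-sum subsequence of length at most $n$. Indeed, in such a subsequence the coefficient of each $e_i$ ($i<r$) forces (number of chosen copies of $e_i$) $+$ (number of chosen copies of $e_i+e_r$) to be $0$ or $n_i$; the coefficient of $e_r$ then involves a nonnegative integer which, thanks to $\sum_{i<r}(n_i-1)\le n-1$, is strictly less than $2n$, and because $n_i\mid n$ it must equal $0$ or $n$ — in either case forcing the subsequence to be empty or to have length exceeding $n$. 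Hence $\eta(G)\ge 2\D(G)-n$; since this argument does not use $p$ odd, it also covers $p=2$, which lies outside Theorem~\ref{thm:conjbound}.

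For the \emph{upper bound} I would induct on the rank $r$. The base case $r=1$ is $G=C_n$, where $\eta(C_n)=n=2\D(C_n)-n$ by the pigeonhole principle on partial sums. For $r\ge 2$, split off the \emph{smallest} cyclic factor: write $G=C_q\oplus G'$ with $q=n_1$ and $G'=C_{n_2}\oplus\cdots\oplus C_{n_r}$, so that $\exp(G')=n$, $r(G')=r-1$, and $\D(G')=\D(G)-(q-1)\le 2n-1=2\exp(G')-1$; the induction hypothesis then gives $\eta(G')=2\D(G')-n$. Let $S$ be a sequence over $G$ with $|S|=2\D(G)-n$, and note $|S|=\eta(G')+2(q-1)$. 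Under the projection $\pi\colon G\to G'$, greedily extract a maximal family $C_1,\dots,C_s$ of disjoint nonempty subsequences of $S$, each of length at most $n$ and each with $\pi(C_j)$ a minimal zero-sum sequence over $G'$. By maximality, $\pi$ of the uncovered part $R$ has no zero-sum subsequence of length at most $n$, so $|R|\le\eta(G')-1$ and hence $\sum_j|C_j|\ge 2q-1$; each $C_j$ sums in $G$ to an element $(z_j,0)\in C_q\oplus G'$. It then suffices to find $\emptyset\ne J\subseteq[s]$ with $\sum_{j\in J}z_j=0$ in $C_q$ and $\sum_{j\in J}|C_j|\le n$, since then $\prod_{j\in J}C_j$ is a zero-sum subsequence of $S$ of length at most $n$.

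I expect this combining step to be the crux. The difficulty is that when $\D(G')$ is close to $n$ the blocks $C_j$ may be forced to be long, hence few, so the bare zero-sum argument over $C_q$ — which uses up $q$ blocks and controls only $|J|\le q$, allowing a total length as large as $qn$ — is far too wasteful; this is exactly the obstruction behind the extra hypotheses in Theorem~\ref{thm:weakmain} ($p>2r(H)$ and $\lceil 2\D(H)/\exp(H)\rceil$ even or at most $3$). My plan to circumvent it is to track the block lengths as weights and run a length-weighted zero-sum argument over $C_q$ — bounding, in effect, a weighted analogue of the Davenport constant of $C_q$ — while exploiting the rigid structure of near-maximal zero-sum-free sequences over the large-exponent $p$-group $G'$, together with a secondary induction on $q$ to handle the case of a small smallest factor. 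That the inequality $\eta(C_q\oplus G')\le\eta(G')+2(q-1)$ underlying the induction can fail once $\D(C_q\oplus G')>2n-1$ (for instance $\eta(C_2\oplus C_2\oplus C_2)=8>6$, although $\eta(C_2\oplus C_2)=4$ obeys the formula) shows that this step must make essential use of the hypothesis on $\D(G)$ and of more than just the value $\eta(G')$; carrying it out with no restriction on $p$ or on the number of blocks is the heart of the proof.
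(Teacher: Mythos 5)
Your lower bound is correct: the sequence $\bigl(\prod_{i<r}e_i^{[n_i-1]}\bigr)\bigl(\prod_{i<r}(e_i+e_r)^{[n_i-1]}\bigr)e_r^{[n-1]}$ of length $2\D(G)-n-1$ is indeed short-zero-sum-free under the hypothesis $\sum_{i<r}(n_i-1)\le n-1$, and this is essentially the construction behind Lemma~\ref{lem:lowereta}; it is the easy half. The upper bound $\eta(G)\le 2\D(G)-n$, which is the entire content of the theorem, is not proved. Your rank induction reduces everything to the "combining step": given blocks $C_1,\dots,C_s$ whose projections are minimal zero-sum over $G'$, find a subfamily whose $C_q$-components sum to zero \emph{and} whose total length is at most $n$. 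You correctly diagnose that the bare Davenport-constant argument over $C_q$ only controls the number of blocks, not their total length, and can produce a subsequence of length up to $qn$ — and then you replace the missing step with a plan ("a length-weighted zero-sum argument \dots exploiting the rigid structure of near-maximal zero-sum-free sequences \dots a secondary induction on $q$") that is never carried out. This is exactly the obstruction that forced the extra hypotheses $p>2r(H)$ and the parity condition in Theorem~\ref{thm:weakmain}, so there is no reason to expect the weighted statement to come for free; as written, the proof has a genuine gap at its crux.

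For contrast, the paper avoids the rank induction and the projection entirely. It introduces $\zeta_i(G)=\s_{[i-n,0]+n\NN^+}(G)$ and shows, by combining the known value $\s_{n\NN^+}(G)=\D(G)+n-1$ for $p$-groups (Lemma~\ref{lem:snN}) with the padding-by-zeros trick you yourself use for $\eta_i$, that $\zeta_i(G)=\D(G)+i-1$ for all $1\le i\le n$. Applying this with $i=\D(G)-n+1$ to a sequence $S$ of length $2\D(G)-n$ produces a zero-sum subsequence $T$ with $|T|\bmod n$ lying in $[\D(G)-n+1,n]$, hence either $|T|\le n$ (done) or $|T|\ge\D(G)+1$, in which case Corollary~\ref{thm:zssinzss} (the Fan--Gao--Wang--Zhong--Zhuang result that a zero-sum sequence of length exceeding $\D(G)$ over such a group contains a short zero-sum subsequence) finishes. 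If you want to keep your architecture, you must actually prove the weighted combining lemma over $C_q$; otherwise the route through $\zeta_i$ and Corollary~\ref{thm:zssinzss} is the one that closes the argument.
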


We also give an extension of Theorem~\ref{thm:main} to groups of the form $G'=C_a\oplus G$, where $G$ is a $p$-group with $\D(G)\leq 2\exp(G)-1$ and $a$ is not divisible by $p$. 

In the process of proving Theorem~\ref{thm:main}, we introduce some additional zero-sum invariants and techniques for using them, whose study may prove interesting in their own right or when applied to proving the second equality in Conjecture~\ref{conj:main}.

In the next section, we recall some useful previous results on zero-sum invariants in various settings. Afterwards, we give a simple proof of our main result and the extension. Finally, we extend the methods we use to make some progress on tackling the second half of Conjecture~\ref{conj:main} and discuss some ideas for further work.

\section{Background and Tools}

We start by giving some bounds on the invariants $\D(G)$, $\eta(G)$, and $\s(G)$ for general $G$.

\begin{thm}
For any finite abelian group $G$,

\[\D(G)\leq \eta(G)\leq \s(G)-\exp(G)+1\leq |G|.\]
\end{thm}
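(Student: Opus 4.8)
The plan is to establish the three inequalities in turn; the first two are purely formal, and I expect only the third to carry real content.

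\emph{First inequality} $\D(G)\le\eta(G)$. This is just monotonicity of $\s_L$ in the index set $L$: if $L\subseteq L'$, then any zero-sum subsequence whose length lies in $L$ has, a fortiori, length in $L'$, so the condition defining $\s_{L'}(G)$ is implied by the one defining $\s_L(G)$, whence $\s_{L'}(G)\le\s_L(G)$. Applying this with $L=[1,\exp(G)]\subseteq\NN^+=L'$ gives $\D(G)=\s_{\NN^+}(G)\le\s_{[1,\exp(G)]}(G)=\eta(G)$.

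\emph{Second inequality} $\eta(G)\le\s(G)-\exp(G)+1$. Here the idea is zero-padding. Put $n=\exp(G)$ and let $S$ be any sequence over $G$ with $|S|\ge\s(G)-n+1$. I would append $n-1$ copies of $0$ to form $S'=S\cdot 0^{\,n-1}$, of length $|S'|\ge\s(G)$; by definition of $\s(G)$, $S'$ has a zero-sum subsequence $T$ with $|T|=n$. Writing $T=T_0\cdot 0^{\,j}$, where $T_0\mid S$ and $0\le j\le n-1$ counts the appended zeros used by $T$, the subsequence $T_0$ is zero-sum (the appended zeros do not affect the sum) and has length $n-j$ with $1\le n-j\le n$ since $0\le j\le n-1$. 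Thus $S$ has a zero-sum subsequence of length in $[1,n]$, proving $\eta(G)\le\s(G)-n+1$.

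\emph{Third inequality} $\s(G)-\exp(G)+1\le|G|$, i.e.\ $\s(G)\le|G|+\exp(G)-1$. This is the step I expect to require real work: it is a standard upper bound coming out of the Erd\H{o}s--Ginzburg--Ziv theorem and its higher-rank generalizations, and one clean option is simply to cite it. For a more self-contained treatment I would begin, with $n=\exp(G)$ and $|S|=|G|+n-1$, by greedily peeling off pairwise disjoint minimal nonempty zero-sum subsequences $B_1,\dots,B_k$ (each of length at most $\D(G)\le|G|$, since a minimal zero-sum subsequence cannot properly contain one and hence cannot be longer than $\D(G)$) until the remainder $R$ is zero-sum-free, so $|R|\le\D(G)-1\le|G|-1$ and therefore $|B_1|+\dots+|B_k|=|S|-|R|\ge n$. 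The remaining task --- upgrading this to a zero-sum subsequence of length \emph{exactly} $n$ --- is the genuine obstacle, and it uses the group structure rather than just the lengths (for instance Cauchy--Davenport or Chevalley--Warning in the cyclic case, combined with an induction on $|G|$ through quotient groups). Once $\s(G)\le|G|+\exp(G)-1$ is in hand, all three quantities are finite --- which, in particular, is what makes the padding argument in the previous paragraph non-vacuous --- and the chain of inequalities is complete.
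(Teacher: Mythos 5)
Your first two inequalities are proved correctly and by the standard arguments: monotonicity of $\s_L$ in $L$ gives $\D(G)\leq\eta(G)$, and the zero-padding trick (append $0^{\,n-1}$, extract a zero-sum subsequence of length exactly $n$, strip the padding) gives $\eta(G)\leq\s(G)-\exp(G)+1$. The paper itself offers no proof at all here --- it cites Lemma~5.7.2 and Theorem~5.7.4 of Geroldinger--Halter-Koch for the whole chain --- so you are already doing more than the source text. You also correctly identify the third inequality, $\s(G)\leq|G|+\exp(G)-1$, as the only step with real content, and your default of citing it matches what the paper does. Be aware, though, that your sketched ``self-contained'' argument for that step is not a proof and should not be presented as one: the greedy extraction of disjoint minimal zero-sum blocks $B_1,\dots,B_k$ with $|B_1|+\cdots+|B_k|\geq n$ gives no mechanism for producing a zero-sum subsequence of length \emph{exactly} $n$ (the blocks need not combine to total length $n$, and subdividing a minimal block destroys the zero-sum property), which is precisely the obstacle you name. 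Since you flag this honestly and fall back on citation, the proposal is acceptable as it stands; just make sure the final write-up either cites the bound cleanly or imports a genuine proof (e.g.\ the induction over subgroups in the cited reference) rather than the partial decomposition argument.
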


This result can be found in Lemma~5.7.2 and Theorem~5.7.4 of \cite{nonunique}. Conjecture~\ref{conj:etaEGZ} is just the claim that equality always holds in the second inequality.

Let $\D^*(G)=\sum_{i=1}^r (n_i-1) + 1$. A simple construction shows that $\D^*(G)\leq \D(G)$ for any $G$. We have already seen that equality holds for a group of rank at most two. Equality also holds for $p$-groups and in another case of interest.

\begin{thm}[Kruyswijk, Olson, {\cite{olson}}]
\label{thm:olson}
If $G$ is a $p$-group, then $\D(G)=\D^*(G)$.
\end{thm}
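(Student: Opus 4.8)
The plan is to run the classical group-algebra argument. One direction is easy: writing $G \cong C_{n_1} \oplus \cdots \oplus C_{n_r}$ (each $n_i$ a power of $p$) and letting $g_i$ be a generator of the $i$-th factor, the sequence in which each $g_i$ appears with multiplicity $n_i - 1$ has length $\sum_i (n_i-1) = \D^*(G)-1$ and admits no nonempty zero-sum subsequence, since a vanishing sub-sum would have to use each $g_i$ a multiple of $n_i$ times. This gives $\D(G) \geq \D^*(G)$, so it remains to establish $\D(G) \leq \D^*(G)$.

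For the upper bound I would work in the group algebra $\F_p[G]$, writing $X^g$ for the basis element attached to $g \in G$, and consider its augmentation ideal $I = \ker(\F_p[G] \to \F_p)$. The structural fact I need is that $I$ is nilpotent of index exactly $\D^*(G)$. To see this, set $x_i = X^{g_i} - 1$; since $g_i$ has order $n_i$ we have $(x_i+1)^{n_i} = X^{n_i g_i} = 1$, and as $n_i$ is a power of $p$ the Frobenius identity gives $(x_i+1)^{n_i} = x_i^{n_i} + 1$, whence $x_i^{n_i} = 0$. A dimension count then identifies $\F_p[G]$ with $\F_p[x_1,\dots,x_r]/(x_1^{n_1},\dots,x_r^{n_r})$, under which $I = (x_1,\dots,x_r)$. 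A monomial $x_1^{a_1}\cdots x_r^{a_r}$ is nonzero precisely when $a_i \leq n_i-1$ for every $i$, so the largest total degree of a nonzero element is $\sum_i (n_i-1) = \D^*(G)-1$; hence $I^{\D^*(G)} = 0$ (while $I^{\D^*(G)-1} \neq 0$).

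Now take any sequence $S = g_1 \cdot \ldots \cdot g_k$ over $G$ with $k \geq \D^*(G)$, and consider $P = \prod_{j=1}^k (X^{g_j} - 1)$. This lies in $I^k \subseteq I^{\D^*(G)} = 0$, so $P = 0$. On the other hand, expanding the product and collecting terms according to the sum $\sigma(T) := \sum_{j \in T} g_j$ over subsets $T \subseteq \{1,\dots,k\}$ gives $P = \sum_{h \in G}\bigl(\sum_{T \colon \sigma(T) = h}(-1)^{k-|T|}\bigr)X^h$; since the $X^h$ form an $\F_p$-basis of $\F_p[G]$, each inner sum vanishes in $\F_p$. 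Taking $h = 0$, the empty subset contributes $(-1)^k$, which is nonzero in $\F_p$, so there must exist a nonempty $T \subseteq \{1,\dots,k\}$ with $\sigma(T) = 0$ --- that is, a nonempty zero-sum subsequence of $S$. Hence $\D(G) \leq \D^*(G)$, and together with the lower bound this gives $\D(G) = \D^*(G)$.

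I expect the only step needing genuine care to be pinning down the nilpotency index of $I$: the identification $\F_p[G] \cong \F_p[x_1,\dots,x_r]/(x_1^{n_1},\dots,x_r^{n_r})$ and the ensuing top-degree bound. Everything after that is formal manipulation in the group algebra together with reading off a single coefficient.
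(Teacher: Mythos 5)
Your argument is correct: it is precisely Olson's classical group-algebra proof (the nilpotency of the augmentation ideal of $\F_p[G]$ at index $\D^*(G)$, combined with expanding $\prod_j(X^{g_j}-1)$ and reading off the coefficient of $X^0$), which is the content of the cited reference. The paper itself states this theorem as a known result and gives no proof, so there is nothing to compare beyond noting that your write-up matches the standard argument; the only cosmetic issue is that you reuse $g_i$ both for the fixed generators of the cyclic factors and for the terms of the arbitrary sequence $S$.
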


\begin{thm}[Geroldinger, {\cite[Corollary~4.2.13]{addgpNonunique}}]
\label{thm:nearD}
If $G'=C_a\oplus G$, where $G$ is a $p$-group with $\D(G)\leq 2\exp(G)-1$ and $a\in \NN^+$, then $\D(G')=\D^*(G')$.
\end{thm}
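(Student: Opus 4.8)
The natural first step is to separate off the $p$-primary part of the adjoined cyclic group. Write $a=p^{\alpha}m$ with $\gcd(m,p)=1$; by the Chinese Remainder Theorem $C_a\cong C_{p^{\alpha}}\oplus C_m$, so
\[G'=C_a\oplus G\cong C_m\oplus P,\qquad P:=C_{p^{\alpha}}\oplus G,\]
and $P$ is a finite abelian $p$-group, whence $\D(P)=\D^*(P)$ by Theorem~\ref{thm:olson}. Comparing invariant decompositions (the coprime factor $C_m$ merges into the largest cyclic summand of $P$) gives $\D^*(G')=\D^*(P)+(m-1)\exp(P)$, so the assertion is equivalent to
\[\D\big(C_m\oplus P\big)=\D(P)+(m-1)\exp(P).\]
Here ``$\ge$'' is the general inequality $\D\ge\D^*$ (via the standard zero-sum-free sequence attached to the invariant decomposition), so the whole content is the upper bound.

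For the upper bound I would use the coprimality $\gcd(m,|P|)=1$ to reduce to Olson's group-algebra argument for $p$-groups. Since $p\nmid m$, the polynomial $T^m-1$ is separable over $\F_p$, so $\F_p[C_m]\cong\prod_j\F_{q_j}$ is a product of finite fields with $\sum_j[\F_{q_j}:\F_p]=m$, and hence
\[\F_p\big[C_m\oplus P\big]\cong\prod_j\F_{q_j}[P].\]
In each factor the augmentation ideal is nilpotent of index exactly $\D^*(P)$ — this is precisely Olson's lemma, and it is insensitive to the (characteristic-$p$) base field. Given a sequence $S=\big((c_i,g_i)\big)_{i=1}^{\ell}$ over $C_m\oplus P$ with no nonempty zero-sum subsequence, the element $u=\prod_{i=1}^{\ell}(1-X^{c_i}Y^{g_i})$ has $X^0Y^0$-coefficient equal to $1$; one wants to show that projecting $u$ through the above decomposition, and using nilpotency together with the $(m-1)$ ``extra'' terms contributed by $C_m$, forces this coefficient to vanish as soon as $\ell\ge\D^*(P)+(m-1)\exp(P)$.

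I expect the main obstacle to be exactly this last step. Unlike in Olson's setting, a factor $1-X^{c_i}Y^{g_i}$ lies in the augmentation ideal of the component $\F_{q_j}[P]$ only when the associated character of $C_m$ is trivial on $c_i$, so the non-principal components are not automatically annihilated by a large $\ell$; the argument has to be iterated (repeatedly extracting zero-sum subsequences of the projection to $P$, recording their sums in $C_m$, and finally combining $m=\D(C_m)$ of them), and one must control how much of $S$ each extraction consumes. This is where the hypothesis $\D(G)\le 2\exp(G)-1$ enters: it keeps the rank of $G$, hence of $P$, small relative to $\exp(P)$, which bounds $\eta(P)$ — via Theorem~\ref{thm:conjbound} when $P$ still satisfies $\D(P)\le 2\exp(P)-1$, which holds exactly in the case $p^{\alpha}>\exp(G)$. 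The genuinely delicate case is $p\mid a$ with $p^{\alpha}\le\exp(G)$, where $\D(P)$ can be as large as $3\exp(P)-2$ and Theorem~\ref{thm:conjbound} does not apply directly; here I would instead induct on $\alpha$, at each step rewriting $C_m\oplus C_{p^{\alpha}}\oplus G$ as a smaller instance of the same problem, or work with the ``lower part'' $L$ of $P$ (so that $C_m\oplus P\cong L\oplus C_{m\exp(P)}$, with $\D(L)\le 2\exp(P)-1$). Finally, the prime $p=2$ needs a separate input, since Theorem~\ref{thm:conjbound} is stated only for odd $p$.
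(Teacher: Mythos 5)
The paper does not actually prove this statement; it is imported as a black box from Geroldinger's Corollary~4.2.13, so there is no in-paper argument to measure yours against --- what follows is an assessment of your proposal on its own terms. Your setup is fine: the reduction $G'\cong C_m\oplus P$ with $P=C_{p^\alpha}\oplus G$ a $p$-group, the identity $\D^*(G')=\D^*(P)+(m-1)\exp(P)$, and the observation that only the upper bound carries content are all correct. But the upper bound is precisely what you never establish. The group-algebra route you lead with collapses for the reason you yourself name: in a non-principal component of $\F_p[C_m\oplus P]\cong\prod_j\F_{q_j}[P]$ the factor $1-X^{c_i}Y^{g_i}$ is typically a unit rather than an element of the augmentation ideal, so Olson's nilpotency argument says nothing there, and no bookkeeping of ``$(m-1)$ extra terms'' repairs this. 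Announcing this as the ``main obstacle'' and then gesturing at a fallback in a subordinate clause is not a proof.

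The fallback you mention in passing --- extract zero-sum subsequences of the projection of $S$ to $P$, record their sums in $C_m$, and combine $\D(C_m)=m$ of them --- is in fact the entire proof (the inductive method), and it must be carried out with explicit length bookkeeping, which you omit. Concretely, in the coprime case: from a sequence $S$ over $C_m\oplus P$ of length $\D(P)+(m-1)\exp(P)$ one extracts $m-1$ pairwise disjoint subsequences whose projections to $P$ are zero-sum of length at most $\exp(P)$ (possible because before each such extraction at least $\D(P)+\exp(P)>\eta(P)$ elements remain), then one more nonempty one from the at least $\D(P)$ surviving elements; the $m$ resulting sums lie in $C_m$, some nonempty subfamily of them sums to zero, and the union of the corresponding subsequences is a nonempty zero-sum subsequence of $S$. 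The only nontrivial input is $\eta(P)\le\D(P)+\exp(P)-1$, which for a $p$-group of large exponent follows from Lemma~\ref{lem:snN} together with Corollary~\ref{thm:zssinzss} and needs neither Theorem~\ref{thm:conjbound} nor $p$ odd, so your worry about $p=2$ is a red herring. What is a genuine issue, and one you also leave unresolved, is the case $p\mid a$ with $p^\alpha\le\exp(G)$, where $\D(P)$ can reach $3\exp(P)-2$ and the short-extraction step needs a separate justification; as written, your argument covers at best the coprime case, which happens to be the only case the paper uses.
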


We will need the following result on short zero-sum subsequences of zero-sum sequences.

\begin{thm}[Fan, Gao, Wang, Zhong, Zhuang, {\cite[Theorem~2]{zssinzss}}]
\label{thm:prezssinzss}
    If $G=\C_{mp^n}\oplus H$ for some $m,n\geq 1$, where $H$ is a $p$-group with $\D(H)\leq p^n$, then any zero-sum sequence over $G$ of length at least $\D(G)+1$ contains a short zero-sum subsequence.
\end{thm}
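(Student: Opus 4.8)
The plan is to induct on $|S|$ over zero-sum sequences $S$ over $G$ of length at least $\D(G)+1$, deriving a contradiction from the assumption that $S$ has no zero-sum subsequence of length in $[1,\exp(G)]$. Write $q=p^n$, so that $\exp(G)=mq$; by Theorems~\ref{thm:olson} and~\ref{thm:nearD} applied to $G=C_{mq}\oplus H$ we have $\D(G)=\D^*(G)=\D(H)+mq-1\le 2mq-1=2\exp(G)-1$, so the hypothesis length is exactly $\D(H)+mq$. The first step is an elementary reduction. If $\D(G)+1\le|S|\le 2\exp(G)$, decompose $S$ into minimal zero-sum subsequences over $G$; there are at least two as $|S|>\D(G)$, and if each had length $>\exp(G)$ their total length would exceed $2\exp(G)\ge|S|$, so one of them is a short zero-sum subsequence --- contradiction. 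Peeling off one minimal zero-sum subsequence at a time, and invoking the inductive hypothesis as long as the leftover still has length $\ge\D(G)+1$, then reduces matters to the case $S=U\cdot V$, a product of exactly two minimal zero-sum sequences over $G$, each of length strictly between $\exp(G)=mq$ and $\D(H)+mq=\D(G)+1$.

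The second step is a block decomposition adapted to the splitting $G=C_{mq}\oplus H$, with coordinate projections $\phi\colon G\to C_{mq}$ and $\psi\colon G\to H$. For a minimal zero-sum sequence $W$ over $G$, greedily pulling minimal zero-sum blocks out of $\psi(W)$ exhausts it, since a zero-sum-free remainder of total sum $0$ must be empty; thus $W=Y_1\cdots Y_w$ with each $\psi(Y_i)$ a minimal zero-sum sequence over $H$, so $1\le|Y_i|\le\D(H)\le q$ and $\sigma(Y_i)=(c_i,0)$ with $c_i\in C_{mq}\setminus\{0\}$ --- if $c_i=0$ then $Y_i$ would already be a short zero-sum subsequence. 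From $|W|\le w\,\D(H)$ and $|W|>mq$ we get $w\ge m+1$, and since $W$ is minimal over $G$, the block-sum sequence $C_W:=c_1\cdots c_w$ is itself a minimal zero-sum sequence over the cyclic group $C_{mq}$ --- a proper nonempty zero-sum subsequence of $C_W$ would lift to one of $W$ --- so $m+1\le w\le\D(C_{mq})=mq$. The crucial feature is that any nonempty zero-sum subsequence of the concatenation $C_U\cdot C_V$ supported on at most $m$ of its terms lifts to a zero-sum subsequence of $S$ of length at most $m\,\D(H)\le mq=\exp(G)$, a short one; so we may assume that $C_U\cdot C_V$, a zero-sum sequence over $C_{mq}$ of length $\ge 2(m+1)$ with $C_U$ and $C_V$ minimal over $C_{mq}$ and both longer than $m$, has no nonempty zero-sum subsequence of length $\le m$.

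The remaining task --- and the step I expect to be the main obstacle --- is to rule out this configuration using the structure theory of minimal zero-sum sequences over the cyclic group $C_{mq}$. Since $W=Y_1\cdots Y_w$ has only $w\le mq$ blocks while $|W|$ can be as large as $\D(H)+mq-1$, one is in one of two regimes: either $C_W$ is long, close to its maximal possible length $mq$, which forces all but a bounded number of blocks (controlled by $\D(H)$) to have length exactly $1$, so that $S$ contains many elements of the special shape $(c,0)$; or $C_W$ is short, in which case it is a rigid minimal zero-sum sequence over $C_{mq}$ and the constraints $|W|>mq$, $|Y_i|\le\D(H)\le q$ drive one toward the extreme case $\D(H)=q$. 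In the first regime, the elements $(c,0)$ occurring in $S$ form, with multiplicity, a sequence over $C_{mq}$: if it has length exceeding $mq$ it already contains a short zero-sum subsum, and otherwise the rigidity of near-maximal-length minimal zero-sum sequences over $C_{mq}$ (for example, the length-$mq$ ones are precisely $g^{mq}$ for a generator $g$, with analogous multiplicity and index bounds in the shorter cases) should force some element $(c,0)$ of order $mq$ to occur at least $mq$ times in $S$, yielding the short zero-sum subsequence $(c,0)^{mq}$. The hard part is carrying out this structural analysis uniformly --- in particular disposing of the second regime and the base case $m=1$, the pure $p$-group case, for which the first reduction buys nothing --- with the finitely many truly degenerate cases (very small $mq$) handled instead by Theorem~\ref{thm:rank2}.
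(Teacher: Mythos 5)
The paper does not actually prove Theorem~\ref{thm:prezssinzss}; it is imported verbatim from Fan, Gao, Wang, Zhong, and Zhuang, so there is no internal proof to compare against. Judged on its own terms, your attempt has a genuine gap. The reductions you do carry out are correct: decomposing into minimal zero-sum sequences disposes of $\D(G)+1\le |S|\le 2\exp(G)$ and, by peeling plus induction, of any factorization into three or more minimal blocks; and the block decomposition $W=Y_1\cdots Y_w$ with each $\psi(Y_i)$ minimal over $H$ correctly reduces the problem to a configuration $C_U\cdot C_V$ over $C_{mq}$ admitting no nonempty zero-sum subsequence supported on at most $m$ blocks. But the step you yourself flag as ``the main obstacle'' --- ruling out that configuration --- is exactly where the entire difficulty of the theorem lives, and you do not prove it: the two-regime discussion is a heuristic sketch (``should force'', ``drive one toward'') with no argument, and the appeal to Theorem~\ref{thm:rank2} for ``degenerate cases'' does not connect to the statement being proved.

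More seriously, your reduction is vacuous precisely in the case this paper needs. When $m=1$ --- i.e., $G$ is itself a $p$-group, the case extracted as Corollary~\ref{thm:zssinzss} and used in the proof of Theorem~\ref{thm:main} --- the condition ``no zero-sum subsequence of $C_U\cdot C_V$ supported on at most $m=1$ blocks'' says only that no block sum $c_i$ vanishes, which you had already arranged, so the cyclic-group configuration carries no contradiction at all (e.g.\ $C_U=C_V=(1,q-1)$ is perfectly consistent). You acknowledge this and offer nothing in its place. The known proof of the $p$-group core of this theorem does not proceed by such an elementary block decomposition; it requires substantially heavier input (group-algebra and character arguments over $\FF_p$ in the tradition of Olson's proof of Theorem~\ref{thm:olson}, together with structure results on long minimal zero-sum sequences). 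As it stands, the proposal is a sound reduction framework wrapped around a missing core, not a proof.
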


We are particularly interested in the case $m=1$.

\begin{cor}
\label{thm:zssinzss}
    If $G$ is a $p$-group and $\D(G)\leq 2\exp(G)-1$, then any zero-sum sequence over $G$ of length at least $\D(G)+1$ contains a short zero-sum subsequence.
\end{cor}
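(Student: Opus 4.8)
The plan is to deduce the statement directly from Theorem~\ref{thm:prezssinzss} in the case $m=1$, the only work being to check that its hypotheses apply. We may assume $G$ is nontrivial, so $\exp(G)=p^n$ for some $n\geq 1$. By the Fundamental Theorem of Finite Abelian Groups, split off a cyclic factor of maximal order: write $G=C_{p^n}\oplus H$, where $H$ is a $p$-group (possibly trivial) with $\exp(H)\mid p^n$. In the notation of Theorem~\ref{thm:prezssinzss} this is exactly the case $m=1$, so it suffices to verify that $\D(H)\leq p^n=\exp(G)$.

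To do this I would use the additivity of $\D^*$ over direct sums together with Olson's theorem (Theorem~\ref{thm:olson}). Writing $H=C_{d_1}\oplus\cdots\oplus C_{d_{r-1}}$ with $1<d_1\mid\cdots\mid d_{r-1}\mid p^n$, the definition of $\D^*$ gives
\[\D^*(G)=\Big(\sum_{i=1}^{r-1}(d_i-1)\Big)+(p^n-1)+1=\D^*(H)+(p^n-1).\]
Since $G$ and $H$ are $p$-groups, Theorem~\ref{thm:olson} yields $\D(G)=\D^*(G)$ and $\D(H)=\D^*(H)$, hence $\D(H)=\D(G)-(p^n-1)$. The assumption $\D(G)\leq 2\exp(G)-1=2p^n-1$ now gives $\D(H)\leq p^n$, exactly the bound required.

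Applying Theorem~\ref{thm:prezssinzss} with $m=1$, every zero-sum sequence over $G$ of length at least $\D(G)+1$ has a zero-sum subsequence of length at most $\exp(G)$, i.e. a short zero-sum subsequence, which is the claim. (When $H$ is trivial, i.e.\ $G$ is cyclic, one may alternatively observe directly that any nonempty zero-sum sequence over $C_{p^n}$ contains a minimal zero-sum subsequence, whose length is at most $\D(C_{p^n})=p^n=\exp(G)$, so the conclusion even holds with no length hypothesis.) I do not expect any genuine obstacle here: all the substance is in Theorem~\ref{thm:prezssinzss}, and the reduction is pure bookkeeping; the only point needing care is confirming that the complementary summand $H$ in $G=C_{\exp(G)}\oplus H$ has Davenport constant at most $\exp(G)$, which is immediate from the displayed identity and Olson's theorem.
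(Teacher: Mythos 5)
Your proposal is correct and follows essentially the same route as the paper: split off $G=C_{p^n}\oplus H$ with $p^n=\exp(G)$, use Olson's theorem to get $\D(H)=\D(G)-(p^n-1)\leq p^n$, and invoke Theorem~\ref{thm:prezssinzss} with $m=1$. The extra detail you supply (the $\D^*$ additivity computation and the cyclic-group aside) is just an expansion of the paper's one-line verification.
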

\begin{proof}
A $p$-group $G$ with $\D(G)\leq 2\exp(G)-1$ can be written in the form $G=C_{p^n}\oplus H$, where $p^n=\exp(G)$ and, by Theorem~\ref{thm:olson}, $\D(H)=\D(G)-(p^n-1)\leq p^n$. So, the conclusion of Theorem~\ref{thm:prezssinzss} holds for such $p$-groups $G$.
\end{proof}

Finally, we will use the following tools to prove the extension of our main result.

\begin{lem}[Edel, Elsholtz, Geroldinger, Kubertin, Rackham, {\cite[Lemma~3.2]{zssBounds}}]
\label{lem:lowereta}
Let $H$ be a finite abelian group and $G'=C_{n}\oplus H$, where $\exp(H)\mid n$. Then
\[\eta(G')\geq 2(\D(H)-1)+n.\]
\end{lem}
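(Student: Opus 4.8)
The statement to prove is the lower bound
\[
\eta(G')\geq 2(\D(H)-1)+n,\qquad G'=C_n\oplus H,\ \exp(H)\mid n.
\]
Since $\eta(G')$ is by definition the \emph{smallest} $\ell$ such that every sequence of length $\ell$ has a short zero-sum subsequence, to bound it from below it suffices to exhibit a single long sequence $S$ over $G'$ that has \emph{no} zero-sum subsequence of length in $[1,n]$. Concretely, I would construct a sequence $S$ of length exactly $2(\D(H)-1)+n-1$ with no short zero-sum subsequence; the existence of such an $S$ forces $\eta(G')>2(\D(H)-1)+n-1$, i.e.\ $\eta(G')\geq 2(\D(H)-1)+n$, which is the claim.

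\textbf{The construction.}
Write $G'=C_n\oplus H$ and let $e$ be a generator of the $C_n$ factor (so $e$ has order $n$ and the $H$-component of $e$ is $0$). The hypothesis $\exp(H)\mid n$ is what makes $C_n$ large enough to carry the whole construction. First, by definition of the Davenport constant there is a zero-sum-free sequence $T=h_1\cdot\ldots\cdot h_{\D(H)-1}$ over $H$ of length $\D(H)-1$ (maximal length of a zero-sum-free sequence). I would lift this to $G'$ by attaching the generator $e$ of $C_n$ to each term, forming
\[
T'=(e,h_1)\cdot (e,h_2)\cdot \ldots \cdot (e,h_{\D(H)-1}).
\]
The sequence I propose is
\[
S=\big[(e,0)\big]^{[\,n-1\,]}\cdot T'\cdot \big[(-(\D(H)-1)e,\,0)\big]\cdot\big(\text{a second zero-sum-free copy contribution}\big),
\]
where the exact multiplicities are chosen so that $|S|=2(\D(H)-1)+n-1$ and so that the $C_n$-components of any short nonempty subsequence cannot vanish unless the subsequence is long. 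The guiding idea is: any zero-sum subsequence must in particular have vanishing $C_n$-component, i.e.\ the number of ``$e$-carrying'' terms it selects must be $\equiv 0 \pmod n$; and any subsequence that \emph{also} has vanishing $H$-component must use essentially all of each zero-sum-free block, forcing its length to exceed $n$. I would tune the two blocks (one of $(e,0)$'s and one built from the two lifted copies of $T$) so that achieving zero sum in the $H$-coordinate requires selecting at least $2(\D(H)-1)$ terms with a further positive contribution from the $C_n$ coordinate, pushing the total length past $n$.

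\textbf{Verification.}
The two things to check for the chosen $S$ are (i) $|S|=2(\D(H)-1)+n-1$, a direct count, and (ii) $S$ has no zero-sum subsequence of length in $[1,n]$. For (ii) I would argue coordinatewise: project a hypothetical short zero-sum subsequence $S''$ to $H$ and to $C_n$. Zero-sum-freeness of $T$ (in each of the two lifted copies) means the $H$-coordinate of $S''$ can vanish only if $S''$ picks up contributions that, combined with the $C_n$-balancing terms, force $|S''|\geq n+1$; simultaneously, the $C_n$-coordinate being $0$ forces the count of $e$-terms to be a positive multiple of $n$ (the order of $e$), which already forces at least $n$ selected terms, and the need to also cancel in $H$ adds strictly more. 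The crux is choosing the second block and the balancing term(s) so these two divisibility/length constraints cannot be met simultaneously with total length $\leq n$; this combinatorial tuning is the main obstacle, and it is exactly where the divisibility $\exp(H)\mid n$ is used to guarantee enough room in the $C_n$ coordinate. Once both coordinates are shown to obstruct a short zero-sum, no short zero-sum subsequence exists, completing the lower bound.
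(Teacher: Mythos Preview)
The paper does not give its own proof of this lemma; it is quoted from the cited source without argument. So there is nothing in the paper to compare against, and I will assess your proposal on its own merits.

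Your overall strategy is exactly right: to prove $\eta(G')\ge 2(\D(H)-1)+n$ one exhibits a sequence of length $2(\D(H)-1)+n-1$ over $G'=C_n\oplus H$ with no zero-sum subsequence of length in $[1,n]$, and the natural building blocks are a generator $e$ of $C_n$ together with a maximal zero-sum-free sequence $h_1\cdots h_{d}$ over $H$ (where $d=\D(H)-1$). The problem is that the only concrete piece of your construction is actually wrong. The ``balancing'' term $\big({-}(\D(H)-1)e,\,0\big)$ you insert immediately creates a short zero-sum subsequence: if $k\in[1,n-1]$ is the residue of $\D(H)-1$ modulo $n$, then this single term together with $k$ of your $n-1$ copies of $(e,0)$ already sums to zero and has length $k+1\le n$; and if $n\mid \D(H)-1$ the term itself is $(0,0)$, a zero-sum of length~$1$. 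So the displayed $S$ fails the very property you need, and the remainder of the proposal (``a second zero-sum-free copy contribution'', ``I would tune the two blocks'') never specifies the missing block or carries out any verification.

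A construction that does work, and whose verification is the short computation your sketch gestures at, is
\[
S \;=\; \prod_{i=1}^{d}(0,h_i)\ \cdot\ (e,0)^{\,n-1}\ \cdot\ \prod_{i=1}^{d}(e,-h_i),
\qquad |S|=2d+n-1.
\]
Suppose a nonempty subsequence uses index set $I\subseteq[d]$ from the first block, $a\in[0,n-1]$ copies of $(e,0)$, and index set $J\subseteq[d]$ from the third block, and has sum zero. The $C_n$-coordinate forces $n\mid a+|J|$, and the $H$-coordinate forces $\sum_{i\in I}h_i=\sum_{j\in J}h_j$. If $a+|J|=0$ then $J=\emptyset$, hence $\sum_{I}h_i=0$, hence $I=\emptyset$ by zero-sum-freeness of $h_1\cdots h_d$; the subsequence is empty. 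Otherwise $a+|J|\ge n$, so the length $|I|+a+|J|$ is at least $|I|+n$; for it to be at most $n$ we would need $I=\emptyset$, forcing $\sum_J h_j=0$, hence $J=\emptyset$, hence $a\ge n$, contradicting $a\le n-1$. Thus every nonempty zero-sum subsequence has length exceeding $n=\exp(G')$, which is the desired conclusion. Note that no separate ``balancing'' term is needed, and the hypothesis $\exp(H)\mid n$ is used only to ensure $\exp(G')=n$.
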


\begin{prop}[Geroldinger, Halter-Koch, {\cite[Proposition~5.7.11]{nonunique}}]
\label{prop:inducteta}
If $H$ is a subgroup of $G$ such that $\exp(G)=\exp(H)\exp(G/H)$, then
\[\eta(G)\leq (\eta(H)-1)\exp(G/H)+\eta(G/H).\]
\end{prop}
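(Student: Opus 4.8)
The plan is to prove this by the standard ``lift, accumulate, and reassemble'' argument. Write $e=\exp(H)$ and $m=\exp(G/H)$, so the hypothesis says $\exp(G)=em$, and let $\varphi\colon G\to G/H$ be the canonical surjection. It suffices to show that every sequence $S$ over $G$ of length $\ell:=(\eta(H)-1)m+\eta(G/H)$ contains a zero-sum subsequence of length in $[1,\exp(G)]$, since this is exactly the assertion $\eta(G)\le\ell$.

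The construction proceeds in two stages. First, extract short zero-sum blocks in the quotient: applying the definition of $\eta(G/H)$ to $\varphi(S)$, as long as the portion of $\varphi(S)$ not yet removed has length at least $\eta(G/H)$, it contains a zero-sum subsequence of length at most $m$, which lifts to a subsequence $T_i$ of $S$ with $1\le|T_i|\le m$ and (using that $\varphi$ is a homomorphism) $\sigma(T_i)\in\ker\varphi=H$, where $\sigma(\cdot)$ denotes the sum of the terms of a sequence. Since removing each block costs at most $m$ terms and $\ell-(\eta(H)-1)m=\eta(G/H)$, this extraction can be carried out $\eta(H)$ times in succession, yielding pairwise disjoint subsequences $T_1,\dots,T_{\eta(H)}$ of $S$ with $1\le|T_i|\le m$ and $h_i:=\sigma(T_i)\in H$. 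Second, accumulate in $H$: the sequence $h_1\cdot\ldots\cdot h_{\eta(H)}$ over $H$ has length $\eta(H)$, so by definition of $\eta(H)$ there is a nonempty $I\subseteq\{1,\dots,\eta(H)\}$ with $|I|\le e$ and $\sum_{i\in I}h_i=0$. Setting $T:=\prod_{i\in I}T_i$, a nonempty subsequence of $S$, we get $\sigma(T)=\sum_{i\in I}h_i=0$ and $|T|=\sum_{i\in I}|T_i|\le|I|\,m\le em=\exp(G)$, so $T$ is the desired short zero-sum subsequence.

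I do not expect a genuine obstacle here. The two points that need care are: (i) checking that the length budget $\ell$ permits exactly $\eta(H)$ rounds of extraction in $G/H$, which is precisely why $\ell$ is assigned its stated value; and (ii) the final length estimate $|T|\le|I|\,m\le\exp(H)\exp(G/H)$, which is exactly where the hypothesis $\exp(G)=\exp(H)\exp(G/H)$ enters — without it, the construction would only bound $|T|$ by $\exp(H)\exp(G/H)$, which need not be at most $\exp(G)$.
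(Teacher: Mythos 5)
Your argument is correct: the length budget $\ell=(\eta(H)-1)\exp(G/H)+\eta(G/H)$ does permit exactly $\eta(H)$ rounds of extraction in the quotient, and the final bound $|T|\le |I|\exp(G/H)\le\exp(H)\exp(G/H)=\exp(G)$ uses the hypothesis exactly as you say. The paper does not prove this proposition but cites it from Geroldinger--Halter-Koch as an instance of the inductive method, and your proof is precisely that standard argument, so there is nothing further to compare.
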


Proposition~\ref{prop:inducteta} is an example of an application of the \emph{inductive method} \cite{nonunique}, an approach used to obtain bounds on the zero-sum invariants of a group $G$ in terms of the zero-sum invariants of its subgroups.

\section{Proof of the Main Result}
\subsection{The Invariants \texorpdfstring{$\zeta_i$}{z\_i}}
As the first step in the proof of Theorem~\ref{thm:main}, we define the invariants $\zeta_i(G)$ and compute them for a $p$-group $G$.
\begin{defn}
    Let $G$ be a finite abelian group and let $n=\exp(G)$. For $1\leq i\leq n$, let
    \[\zeta_i(G)=\s_{[i-n,0]+n\NN^+}(G).\]
\end{defn}

That is, $\zeta_i(G)$ is the shortest length $\ell$ for which any sequence over $G$ of length at least $\ell$ contains a nonempty zero-sum subsequence $T$ such that $T$ is congruent mod $\exp(G)$ to some residue between $i$ and $\exp(G)$, inclusive.

Note that $\zeta_1(G)=\D(G)$ by definition, and $\zeta_{\exp(G)}=\s_{\exp(G)\NN^+}(G)$. When $G$ is a $p$-group, the precise value of $\s_{\exp(G)\NN^+}(G)$ is known.

\begin{lem}[Gao, Geroldinger, {\cite[Theorem~6.7]{zssSurvey}}]
\label{lem:snN}
If $G$ is a $p$-group with $n=\exp(G)$, then
    \[\s_{n\NN^+}(G)=\D(G)+n-1.\]
\end{lem}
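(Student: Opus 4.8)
The plan is to establish the two matching inequalities $\s_{n\NN^+}(G)\ge \D(G)+n-1$ and $\s_{n\NN^+}(G)\le \D(G)+n-1$, where $n=\exp(G)$. For the lower bound I would write down an explicit extremal sequence: let $U$ be a zero-sum-free sequence over $G$ of length $\D(G)-1$ — one exists by the definition of the Davenport constant — and set $S=U\cdot 0^{n-1}$, appending $n-1$ copies of the identity, so that $|S|=\D(G)+n-2$. If $T$ is a nonempty zero-sum subsequence of $S$, split off its copies of $0$; the remaining nonzero part is a zero-sum subsequence of $U$, hence empty, so $T$ consists only of copies of $0$ and has length at most $n-1$. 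No nonempty zero-sum subsequence of $S$ therefore has length in $n\NN^+$, which gives $\s_{n\NN^+}(G)\ge \D(G)+n-1$.

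For the upper bound the idea is to absorb the ``length divisible by $n$'' condition into an auxiliary coordinate. Given a sequence $S=g_1\cdot\ldots\cdot g_\ell$ over $G$, form the sequence $S'=(g_1,1)\cdot\ldots\cdot(g_\ell,1)$ over $G\oplus C_n$. A subsequence of $S'$ indexed by a set $I$ is zero-sum in $G\oplus C_n$ exactly when $\sum_{i\in I}g_i=0$ and $|I|\equiv 0\pmod n$, so a nonempty zero-sum subsequence of $S'$ is precisely a zero-sum subsequence of $S$ of length in $n\NN^+$. Hence $\s_{n\NN^+}(G)\le \D(G\oplus C_n)$. Since $n$ is a power of $p$, the group $G\oplus C_n$ is again a $p$-group, so Theorem~\ref{thm:olson} computes its Davenport constant: writing $G\cong C_{n_1}\oplus\cdots\oplus C_{n_r}$ with $n_r=n$,
\[\D(G\oplus C_n)=\D^*(G\oplus C_n)=\sum_{i=1}^r (n_i-1)+(n-1)+1=\D^*(G)+n-1=\D(G)+n-1,\]
again by Theorem~\ref{thm:olson}. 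This yields $\s_{n\NN^+}(G)\le \D(G)+n-1$ and completes the argument.

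I do not anticipate a real obstacle: the proof is short, and the only idea required is the standard embedding $g\mapsto(g,1)$ that trades a length constraint for a zero-sum condition in a larger group. The one point to watch is that this embedding gives only the inequality $\s_{n\NN^+}(G)\le \D(G\oplus C_n)$, since it uses only elements whose last coordinate equals $1$; the explicit construction of the first paragraph supplies the matching lower bound, so no sharpness is lost.
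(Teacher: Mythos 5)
Your proof is correct, and it is the standard argument for this result: the paper itself only cites Gao--Geroldinger for this lemma rather than proving it, and your two halves (the extremal sequence $U\cdot 0^{n-1}$ with $U$ zero-sum free of length $\D(G)-1$ for the lower bound, and the embedding $g\mapsto(g,1)$ into $G\oplus C_n$ combined with Olson's theorem $\D=\D^*$ for $p$-groups for the upper bound) are exactly how the cited result is established in the literature. The one hypothesis you use implicitly and correctly is that $n=\exp(G)$ is a power of $p$, so that $G\oplus C_n$ is again a $p$-group and Theorem~\ref{thm:olson} applies; nothing is missing.
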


This allows us to compute $\zeta_i(G)$ for all $i\in[1,n]$ for a $p$-group $G$.

\begin{lem}
\label{lem:zetaVals}
Let $G$ be a $p$-group and let $n=\exp(G)$. For $1\leq i\leq n$, we have
\[\zeta_i(G)=\D(G)+i-1.\]
\end{lem}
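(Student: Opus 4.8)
The plan is to establish the two inequalities $\zeta_i(G)\le \D(G)+i-1$ and $\zeta_i(G)\ge \D(G)+i-1$ separately, for each $1\le i\le n$. Both endpoints $i=1$ (where $\zeta_1(G)=\D(G)$ by definition) and $i=n$ (where $\zeta_n(G)=\s_{n\NN^+}(G)=\D(G)+n-1$ by Lemma~\ref{lem:snN}) are already known, so the content is the interpolation in between, and I expect the proof to reduce to these two anchor cases plus a short combinatorial argument.

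\emph{Lower bound.} To show $\zeta_i(G)\ge \D(G)+i-1$, I would exhibit a sequence of length $\D(G)+i-2$ with no nonempty zero-sum subsequence whose length lies in $[i-n,0]+n\NN^+$, i.e.\ no nonempty zero-sum subsequence of length congruent to one of $i,i+1,\dots,n$ modulo $n$. Take a zero-sum-free sequence $U$ over $G$ of maximal length $\D(G)-1$, and append $i-1$ copies of the zero element $0\in G$. Any nonempty zero-sum subsequence $T$ of this sequence must use only copies of $0$ (since $U$ is zero-sum-free, and more generally any subsequence meeting $U$ nontrivially has nonzero sum unless... — here one needs that $U$ remains zero-sum-free, which it is), so $T$ has length between $1$ and $i-1$; since $1\le |T|\le i-1<i\le n$, the length $|T|$ is not congruent mod $n$ to anything in $\{i,\dots,n\}$. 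This gives the bound. The one subtlety to check is that appending zeros does not create new zero-sum subsequences interacting with $U$; since the sum over any sub-collection is unchanged by deleting zeros, this is immediate.

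\emph{Upper bound.} To show $\zeta_i(G)\le \D(G)+i-1$, let $S$ be any sequence over $G$ of length $\D(G)+i-1$; I must find a nonempty zero-sum subsequence $T$ whose length modulo $n$ lies in $\{i,\dots,n\}$ (equivalently, $|T|\bmod n \ge i$, where we read $0$ as $n$). The idea is to greedily extract short zero-sum subsequences. Since $|S|\ge \D(G)+i-1\ge \D(G)$, by Corollary~\ref{thm:zssinzss} (applied after first noting $|S|\ge \D(G)+1$ in the relevant range, or directly since any sequence of length $\ge \D(G)$ has a zero-sum subsequence) we can peel off nonempty zero-sum subsequences $T_1,T_2,\dots$; as long as what remains has length $\ge \D(G)$ we can continue. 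Because $|S|\ge \D(G)+i-1$, we can extract zero-sum subsequences until the total extracted length is at least $i-1$ — more carefully, we extract blocks until the running total of lengths first reaches or exceeds $i-1$; each block has length at most... here is where one wants control on block sizes. If we only use Corollary~\ref{thm:zssinzss} to guarantee a \emph{short} zero-sum subsequence (length $\le n$) once the remaining length exceeds $\D(G)$, then each extracted short block has length in $[1,n]$, and we can stop the moment the cumulative length $L$ satisfies $L\ge i$; since blocks grow by at most $n$, we have $i\le L\le i-1+n$, so $L\bmod n$ (with $0\leftrightarrow n$) lies in $\{i,\dots,n\}$ — wait, $L$ could equal, say, $i-1+n$ only if the last block had length exactly $n$ and we were at $i-1$ before, but then $L\equiv i-1\pmod n$, which is \emph{not} in the desired range. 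So the naive greedy stopping rule is not quite enough, and fixing this is the main obstacle.

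\emph{The main obstacle and how I would address it.} The delicate point is ensuring the \emph{total} extracted length $L$ satisfies $i\le L\le n$ exactly, not merely $i\le L< i+n$. Here is the fix I would pursue: extract short zero-sum subsequences one at a time, maintaining the union $T$ of those extracted. Initially $|T|=0$. After each extraction $|T|$ increases by some amount in $[1,n]$. I claim we can always reach a state with $i\le |T|\le n$: starting from $|T|<i\le n$ and adding a block of size $\le n$, if the new value exceeds $n$ we instead look inside the most recently added block — any zero-sum sequence of length $>\,?$... Alternatively, and more cleanly: the union of the first few extracted short zero-sum subsequences is itself a zero-sum sequence; once its length exceeds $\D(G)$, by Corollary~\ref{thm:zssinzss} it contains a \emph{short} zero-sum subsequence, i.e.\ we can always trim a long accumulated zero-sum subsequence down below length $n+1$ while keeping length $\ge 1$; and we can arrange to keep length $\ge i$ by only trimming when above $n\ge i$. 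Concretely: build up $T$ by adding short zero-sum blocks until $|T|\ge i$ for the first time (so $|T|\le n+ i-1$); if $|T|\le n$ we are done; if $n<|T|\le n+i-1\le 2n-1< \D(G)+1$ — hmm, this needs $|T|\ge \D(G)+1$ to re-apply the corollary, which fails. So the right tool is rather: directly prove by induction on $i$ that any sequence of length $\D(G)+i-1$ has a nonempty zero-sum subsequence of length exactly $\ell$ for \emph{some} $\ell\in[i,n]$ — reducing $i$ to $i+1$ by extracting one short zero-sum subsequence of length $\le n$ and recursing on the rest. I expect the clean writeup to interpolate between $i=1$ and $i=n$ using Corollary~\ref{thm:zssinzss} as the induction step, peeling one short zero-sum subsequence at a time and tracking the accumulated length against the window $[i,n]$; making the bookkeeping in this peeling argument airtight (in particular handling the boundary where a single peeled block overshoots $n$) is the step I would spend the most care on.
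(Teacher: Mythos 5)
Your lower bound is fine (the standard construction: a maximal zero-sum-free sequence plus $i-1$ zeros), but the paper does not even need it separately, and your upper bound has a genuine gap that you yourself flag and do not close. The greedy peeling argument fails exactly where you say it does: the accumulated length can land in $[n+1,\,n+i-1]$, whose residues mod $n$ lie in $[1,i-1]$, outside the target window. Worse, the repair you settle on --- proving by induction that every sequence of length $\D(G)+i-1$ has a zero-sum subsequence of length \emph{exactly} some $\ell\in[i,n]$ --- is a false statement in general: at $i=n$ it asserts $\s(G)\le \D(G)+n-1$, which already fails for $G=C_n\oplus C_n$ (there $\D(G)=2n-1$ and $\s(G)=4n-3=\D(G)+2n-2$). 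The definition of $\zeta_i$ deliberately permits lengths that are merely \emph{congruent} mod $n$ to something in $[i,n]$, and that flexibility is essential, not cosmetic. A further problem: you invoke Corollary~\ref{thm:zssinzss}, which requires $\D(G)\le 2\exp(G)-1$, but Lemma~\ref{lem:zetaVals} is stated for arbitrary $p$-groups, so that tool is not available here.

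The missing idea is to run the upper bound \emph{downward from $i=n$} rather than upward by extraction. You correctly identified the two anchors $\zeta_1(G)=\D(G)$ and $\zeta_n(G)=\s_{n\NN^+}(G)=\D(G)+n-1$ (Lemma~\ref{lem:snN}), but the interpolation is a squeeze, not a construction: prepending a single $0$ to a sequence $S$ of length $\zeta_{i+1}(G)-1$ produces a sequence of length $\zeta_{i+1}(G)$, whose guaranteed zero-sum subsequence of length $\equiv r\pmod n$ with $r\in[i+1,n]$ yields, after possibly deleting the prepended zero, a nonempty zero-sum subsequence of $S$ of length $\equiv r$ or $r-1 \pmod n$, hence in the window for $\zeta_i$. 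This gives $\zeta_i(G)\le\zeta_{i+1}(G)-1$, so the $n$ values $\zeta_1<\cdots<\zeta_n$ are strictly increasing between two endpoints that differ by exactly $n-1$, forcing $\zeta_i(G)=\D(G)+i-1$ for every $i$ --- both inequalities at once, with no zero-sum-free construction and no peeling needed.
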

\begin{proof}
Let $1\leq i\leq n-1$. For any $t>1$ and sequence $S$, if the sequence $0S$ obtained by prepending $0$ to $S$ contains a zero-sum subsequence of length $t$, then $S$ contains a zero-sum sequence of length $t-1$ or $t$. So $\zeta_i(G)\leq \zeta_{i+1}(G)-1$. Thus, by Lemma~\ref{lem:snN},
\[\D(G)=\zeta_1(G)<\zeta_2(G)<\cdots <\zeta_n(G)=\D(G)+n-1,\]
implying that $\zeta_i(G)=\D(G)+i-1$ for each $i$, as desired.
\end{proof}

Now we can proceed with the proof of Theorem~\ref{thm:main} itself.

\subsection{Main Proof}
\begin{proof}[Proof of Theorem~\ref{thm:main}]
Let $G$ be a $p$-group with $\D(G)\leq 2\exp(G)-1$. Let $n=\exp(G)$. By Theorem~\ref{thm:conjbound}, $\eta(G)\geq 2\D(G)-n$, so it suffices to show that $\eta(G)\leq 2\D(G)-n$.

Let $S$ be a sequence of length $2\D(G)-n$ over $G$. We wish to show that $S$ contains a short zero-sum subsequence. Note that $1\leq \D(G)-n+1\leq n$, so by Lemma~\ref{lem:zetaVals}, $|S|=\zeta_{\D(G)-n+1}(G)$. Thus, $S$ contains a nonempty zero-sum subsequence $T$ whose length $t$ is congruent mod $n$ to some residue between $\D(G)-n+1$ and $n$, inclusive. Then either $t\leq n$ or $t\geq \D(G)+1$.

If $t\leq n$, then $T\subseteq S$ is a short zero-sum subsequence of $S$, as desired.

Otherwise, $t\geq \D(G)+1$, so by Corollary~\ref{thm:zssinzss}, $T$ contains a short zero-sum subsequence, meaning $S$ contains a short zero-sum subsequence. So $S$ contains a short zero-sum subsequence in any case.

Thus, $\eta(G)= 2\D(G)-n$, which rearranges to the desired equality.
\end{proof}

\section{Extension to \texorpdfstring{$C_a\oplus G$}{C\_a+G}}

As with Theorem~\ref{thm:weakmain}, our main result can be extended to groups of the form $G'=C_a\oplus G$, where $G$ is a $p$-group with $\D(G)\leq 2\exp(G)-1$ and $a$ is not divisible by $p$. The proof is essentially the same as the derivation of Theorem~\ref{thm:weakmain} from the $a=1$ case, given in \cite{main}.

\begin{thm}
\label{thm:mainExt}
    Let $G$ be a finite abelian $p$-group with $\D(G)\leq 2\exp(G)-1$. Then for any positive integer $a$ not divisible by $p$, $G'=C_a\oplus G$ satisfies
    $$2\D(G')-1=\eta(G')+\exp(G')-1.$$ 
\end{thm}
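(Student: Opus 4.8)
The plan is to reduce Theorem~\ref{thm:mainExt} to Theorem~\ref{thm:main} via the inductive method, mimicking the passage from the $a=1$ case to Theorem~\ref{thm:weakmain} in \cite{main}. Write $n=\exp(G)=p^k$ and $G'=C_a\oplus G$, so that $\exp(G')=an$ since $\gcd(a,p)=1$. The group $H=G\leq G'$ is a subgroup with $G'/H\cong C_a$, and $\exp(G')=an=\exp(H)\exp(G'/H)$, so Proposition~\ref{prop:inducteta} applies: $\eta(G')\leq (\eta(G)-1)a+\eta(C_a)$. Since $\eta(C_a)=a$ (every sequence of length $a$ over a cyclic group of order $a$ has a zero-sum subsequence of length at most $a$, namely by the Davenport/EGZ-type argument; indeed $\eta(C_a)=\D(C_a)=a$), this gives $\eta(G')\leq a\,\eta(G)-a+a=a\,\eta(G)$. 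By Theorem~\ref{thm:main}, $\eta(G)=2\D(G)-n+1$, so $\eta(G')\leq a(2\D(G)-n+1)=2a\D(G)-an+a$. Now invoke Theorem~\ref{thm:nearD}: $\D(G')=\D^*(G')=\D^*(G)+(a-1)=\D(G)+a-1$ (using Theorem~\ref{thm:olson} for $\D(G)=\D^*(G)$), whence $2\D(G')-1=2\D(G)+2a-3$. Rearranging the target equality, it suffices to show $\eta(G')\leq 2\D(G')-1-(\exp(G')-1)=2\D(G)+2a-3-an+1=2\D(G)+2a-2-an$. But the upper bound just obtained is $2a\D(G)-an+a$, which is generally \emph{larger} than this when $a>1$, so the naive single application of Proposition~\ref{prop:inducteta} is too lossy.

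The fix, following \cite{main}, is to apply the inductive method in the other direction, peeling off the $C_a$ factor with a sharper intermediate bound — or, more precisely, to use a hybrid of Proposition~\ref{prop:inducteta} with the lower bound from Lemma~\ref{lem:lowereta} only to pin down the answer, and to get the matching upper bound by a direct sequence argument analogous to the proof of Theorem~\ref{thm:main}. Concretely: first establish the lower bound $\eta(G')\geq 2\D(G')-\exp(G')+1$. Write $G=C_n\oplus K$ so that $G'=C_{an}\oplus K$ with $\exp(K)\mid n\mid an$; then Lemma~\ref{lem:lowereta} gives $\eta(G')\geq 2(\D(C_n\oplus K)-1)+an$... but that is not quite $G'$ in the right form either. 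Instead, the cleanest route is: by Theorem~\ref{thm:conjbound}-type reasoning (or directly), $\eta(G')\geq 2\D(G')-\exp(G')+1$ always holds via the standard extremal construction — take $\D(G')-1$ copies realizing $\D(G')$ together with $\exp(G')-1$ copies of a suitable generator, giving a sequence of length $2\D(G')-\exp(G')$ with no short zero-sum subsequence; this is the content of the left inequality in Theorem~\ref{thm:conjbound} and holds in this setting as well. So the work is entirely in the upper bound $\eta(G')\leq 2\D(G')-\exp(G')+1$, i.e. $\eta(G')\leq 2\D(G)+2a-2-an$.

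For the upper bound, the plan is to imitate the Main Proof directly on $G'$ rather than going through $\eta(G)$ blindly. Let $S$ be a sequence over $G'=C_a\oplus G$ of length $2\D(G')-\exp(G')+1=2\D(G)+2a-2-an$; I want a zero-sum subsequence of length at most $an$. Project to the $C_a$ coordinate. The idea is a two-stage extraction: first use the $C_a$-projection together with the invariants $\zeta_i$ of $G$ to produce a long zero-sum subsequence whose length lies in a controlled congruence class modulo $an$, then apply a "zero-sum in zero-sum" result to shorten it below $an$. The analogue of Corollary~\ref{thm:zssinzss} needed here is exactly Theorem~\ref{thm:prezssinzss} with $m=a$ and $n\mapsto p^k$: $G'=C_{ap^k}\oplus K$ with $\D(K)\leq p^k$, so every zero-sum sequence over $G'$ of length $\geq \D(G')+1$ contains a short (length $\leq ap^k=\exp(G')$) zero-sum subsequence. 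Thus it suffices to extract from $S$ a nonempty zero-sum subsequence $T$ with $|T|\leq \exp(G')$ or $|T|\geq \D(G')+1$; equivalently, a zero-sum subsequence whose length, reduced mod $\exp(G')=an$, lies in $[\exp(G')-\D(G')+\ldots]$ — one sets up the appropriate $\zeta$-type invariant for $G'$. The combinatorial heart is showing $|S|$ is large enough to force such a $T$: one combines the $C_a$-component (handled cyclically, costing about $a-1$ extra elements per "round") with repeated application of $\zeta_i(G)=\D(G)+i-1$ from Lemma~\ref{lem:zetaVals}, summing a geometric-type count that collapses to $2\D(G)+2a-2-an$.

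I expect the main obstacle to be bookkeeping the interaction between the cyclic $C_a$-part and the $p$-part in the extraction: one must carefully choose which residue class mod $an$ to target so that the two cases ($|T|\leq an$ versus $|T|\geq \D(G')+1$) exhaust all possibilities, and verify that the length $2\D(G')-\exp(G')+1$ is the exact threshold at which this forcing argument goes through — neither the crude Proposition~\ref{prop:inducteta} bound nor a naive union bound is tight enough, so the argument must track the congruence class as in the $\zeta_i$ machinery of Section~3, adapted to the non-$p$-group $G'$. Once the right intermediate invariant for $G'$ is identified and shown to equal $\D(G')+(\text{appropriate }i)-1$ (using that $\D(G')=\D^*(G')$ from Theorem~\ref{thm:nearD}), the proof of Theorem~\ref{thm:main} transfers essentially verbatim.
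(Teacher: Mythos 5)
There is a genuine gap: you correctly diagnose that applying Proposition~\ref{prop:inducteta} with subgroup $H=G$ and quotient $C_a$ gives only $\eta(G')\leq a\,\eta(G)$, which is too weak, but you then abandon the inductive method instead of simply swapping the roles. The paper's proof takes $H=C_a$ as the subgroup and $G'/H\cong G$ as the quotient (the hypothesis $\exp(G')=\exp(C_a)\exp(G)=an$ holds since $\gcd(a,p)=1$), which yields $\eta(G')\leq(\eta(C_a)-1)\exp(G)+\eta(G)=(a-1)n+\eta(G)$; combined with $\eta(G)=2\D(G)-n$ from Theorem~\ref{thm:main} (note your value $2\D(G)-n+1$ is off by one) this is exactly $2\D(G')-\exp(G')$, matching the lower bound from Lemma~\ref{lem:lowereta} applied to $G'=C_{an}\oplus K$ where $G=C_n\oplus K$. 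No new sequence argument over $G'$ is needed.

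Two further errors derail your alternative plan. First, $\D^*(G')\neq\D(G)+a-1$: since $\gcd(a,n)=1$, the invariant-factor decomposition of $G'$ is $C_{an}\oplus K$, so $\D^*(G')=\D(G)+(a-1)n$. With your value, the target upper bound $2\D(G)+2a-2-an$ is typically smaller than $\D(G')$ itself and hence cannot hold; the correct target is $2\D(G)+(a-2)n$. Second, the direct extraction argument you sketch depends on the $\zeta_i$ machinery, which the paper establishes only for $p$-groups (Lemma~\ref{lem:zetaVals} rests on Lemma~\ref{lem:snN}, a $p$-group result); extending it to $C_{an}\oplus K$ is nontrivial and is left entirely as a sketch ("summing a geometric-type count that collapses to\ldots"), so even setting aside the arithmetic errors, the proposal does not constitute a proof.
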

\begin{proof}
Let $n=\exp(G)$. By Theorem~\ref{thm:nearD} we have $\D(G')=\D^*(G')=\D(G)+(a-1)n$, and by Lemma~\ref{lem:lowereta} we have $\eta(G')\geq 2(\D(G)-n)+an=2\D(G')-an$.

By Proposition~\ref{prop:inducteta}, we have
\[\eta(G')\leq (\eta(C_a)-1)\exp(G)+\eta(G)=(a-1)n+\eta(G).\]

Finally, by Theorem~\ref{thm:main}, we have $\eta(G)=2\D(G)-n$, so
\[\eta(G')\leq 2\D(G)+(a-2)n=2\D(G')-an.\]
Thus $\eta(G')=2\D(G')-\exp(G')$, which rearranges to the desired equality.
\end{proof}

Note that Theorem~\ref{thm:mainExt} is an extension of Theorem~4.2.1 in \cite{congCond}. Remark~4.3 in \cite{congCond} pointed out that a result like this would follow given Theorem~\ref{thm:main}.

\section{Towards Evaluating \texorpdfstring{$\s(G)$}{s(G)}}

It would be interesting to see if a similar method suffices for verifying the second half of Conjecture~\ref{conj:main}, showing that $\s(G)=2\D(G)-1$ for $G$ a $p$-group with $\D(G)\leq 2\exp(G)-1$. Here we describe an approach that seems promising and use it to make some progress.

Let $n=\exp(G)$. For $1\leq i\leq n$, let $\eta_i(G)=\s_{[i,n]}(G)$. Since a sequence $S$ contains a zero-sum subsequence of length $i$ or $i+1$ if $0S$ contains a zero-sum subsequence of length $i+1$, it is easy to see that
\[\eta(G)=\eta_1(G)<\cdots<\eta_n(G)=\s(G).\]

So, if Conjecture~\ref{conj:etaEGZ} holds in general, $\eta_i(G)=\eta_{i-1}(G)+1=\eta(G)+i-1$ should always hold for $2\leq i\leq n$.

The following partial result is known.

\begin{lem}[Gao, {\cite[Lemma~2.6]{restrSize2}}]
\label{lem:etaihalf}
For any finite abelian group $G$, $\eta_i(G)=\eta(G)+i-1$ for $2\leq i\leq \lfloor \frac{n}{2}\rfloor + 1$.
\end{lem}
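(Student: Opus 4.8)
The plan is to establish the two inequalities $\eta_i(G)\ge \eta(G)+i-1$ and $\eta_i(G)\le \eta(G)+i-1$ separately; only the upper bound uses the hypothesis $i\le\lfloor n/2\rfloor+1$. The lower bound is immediate from the strict chain $\eta(G)=\eta_1(G)<\eta_2(G)<\cdots$ recorded above: each inequality there improves the invariant by at least one, so $\eta_i(G)\ge\eta_1(G)+(i-1)=\eta(G)+i-1$.

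For the upper bound I would use a greedy extraction of short zero-sum subsequences. Let $S$ be any sequence over $G$ with $|S|=\eta(G)+i-1$; the goal is a zero-sum subsequence of length in $[i,n]$. Build disjoint short zero-sum subsequences $T_1,T_2,\dots$ one at a time, taking $T_k$ to be a short zero-sum subsequence of $S\cdot(T_1\cdots T_{k-1})^{-1}$, and continuing as long as $|T_1|+\cdots+|T_{k-1}|\le i-1$. Whenever this condition holds, the un-extracted part of $S$ has length at least $(\eta(G)+i-1)-(i-1)=\eta(G)$, so it does contain a short zero-sum subsequence by definition of $\eta(G)$; hence the extraction never gets stuck before the condition fails, and since each $|T_k|\ge1$ the cumulative length strictly increases, so the process stops at some finite index $m$ with $|T_1|+\cdots+|T_m|\ge i$.

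Now split into cases. If one of the extracted $T_k$ already has $|T_k|\ge i$, then $|T_k|\le n$ since $T_k$ is short, so $T_k$ itself works. Otherwise every $|T_k|\le i-1$, and $T:=T_1\cdots T_m$ is a zero-sum subsequence whose length satisfies $i\le|T|=|T_1|+\cdots+|T_m|\le(i-1)+(i-1)=2(i-1)$, using $|T_1|+\cdots+|T_{m-1}|\le i-1$ (the stopping condition had not yet been met after $m-1$ steps) together with $|T_m|\le i-1$; and $i\le\lfloor n/2\rfloor+1$ gives $2(i-1)\le2\lfloor n/2\rfloor\le n$, so $|T|\in[i,n]$. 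The only point requiring any care is this final inequality: the role of the bound $i\le\lfloor n/2\rfloor+1$ is exactly to guarantee that a pile of short zero-sum subsequences, each of length at most $i-1$, cannot overshoot the window $[i,n]$, since the first time its cumulative length reaches $i$ that length is at most $2(i-1)\le n$. Past this threshold this control breaks down, which is presumably why proving $\eta_i(G)=\eta(G)+i-1$ for all $i\le n$ — equivalently, Conjecture~\ref{conj:etaEGZ} — requires a genuinely different idea.
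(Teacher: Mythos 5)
Your proof is correct, and in fact it is essentially the standard argument: the paper itself gives no proof of this lemma, quoting it directly from Gao's paper, but the greedy extraction of disjoint short zero-sum subsequences until their cumulative length first reaches $i$ (so that it lands in $[i,2(i-1)]\subseteq[i,n]$) is exactly how the cited result is proved. Your closing observation about why the threshold $\lfloor n/2\rfloor+1$ is the natural limit of this method also matches the paper's own remark that the approach does not directly generalize beyond that point.
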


Unfortunately, the approach used to prove this lemma does not directly generalize to the case where $i>\lfloor \frac{n}{2}\rfloor + 1$. However, we can apply the same idea to get an improved result when specializing to a $p$-group of large exponent, via a result generalizing Corollary~\ref{thm:zssinzss} on short zero-sum subsequences of zero-sum sequences.

\begin{thm}
\label{thm:zssin2n}
Let $G$ be a $p$-group with $\D(G)\leq 2\exp(G)-1$. For $1\leq i \leq 2\exp(G)-\D(G)$, any zero-sum sequence $S$ of length exactly $\D(G)+i$ over $G$ contains a short zero-sum subsequence of length at least $i$. 
\end{thm}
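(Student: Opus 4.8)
The statement to prove is: if $G$ is a $p$-group with $\D(G) \leq 2n-1$ where $n = \exp(G)$, and $1 \leq i \leq 2n - \D(G)$, then every zero-sum sequence $S$ of length exactly $\D(G) + i$ contains a short zero-sum subsequence of length at least $i$. I would prove this by induction on $i$, with the base case $i=1$ being exactly Corollary~\ref{thm:zssinzss} (any zero-sum sequence of length $\D(G)+1$ contains a short zero-sum subsequence, and length $\geq 1$ is automatic). So the content is the inductive step.

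**The inductive step.** Suppose the result holds for $i-1$ and let $S$ be a zero-sum sequence of length $\D(G) + i$ with $2 \leq i \leq 2n - \D(G)$. Since $|S| = \D(G) + i \geq \D(G) + 1$, by Corollary~\ref{thm:zssinzss} there is a short zero-sum subsequence $T \mid S$, say of length $t \leq n$. If $t \geq i$ we are done, so assume $t \leq i - 1$. Now consider the complementary sequence $S' = S T^{-1}$, which is again zero-sum (difference of two zero-sum sequences) and has length $\D(G) + i - t$. The idea is to extract a short zero-sum subsequence of the right length from $S'$, or from $S'$ together with $T$. Since $|S'| = \D(G) + (i - t) + $ (nothing extra)$ = \D(G) + i - t$, and $i - t \geq 1$, we would like to invoke the inductive hypothesis on $S'$ — but the hypothesis as stated requires the length to be \emph{exactly} $\D(G) + j$ for the corresponding $j$, and we need $j = i - t$ to satisfy $j \leq 2n - \D(G)$, which holds since $i - t < i \leq 2n - \D(G)$. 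So by induction $S'$ contains a short zero-sum subsequence $T'$ of length $t' \geq i - t$ with $t' \leq n$. Then $T T'$ is a zero-sum subsequence of $S$ of length $t + t' \geq i$. The problem is that $t + t'$ may exceed $n$, so $TT'$ need not be \emph{short}. This is the main obstacle.

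**Overcoming the length obstacle.** The key point is to control how large $t + t'$ can be. We have $t \leq i - 1 \leq 2n - \D(G) - 1 \leq n - 1$ (using $\D(G) \geq n$, which holds since $\exp(G) \mid \D(G)$... actually more simply $\D(G) \geq \D^*(G) \geq n$), and $t' \leq n$, so $t + t'$ could be up to roughly $2n - 1$. If $t + t' \leq n$ we are done directly. If $t + t' > n$, then I would apply Corollary~\ref{thm:zssinzss} or a length argument to the zero-sum sequence $TT'$: it has length $t + t' \leq 2n - 1 \leq \D(G) + (\text{something})$... the cleanest route is probably different — rather than combining $T$ and $T'$ blindly, I would choose $T$ to be a short zero-sum subsequence of \emph{maximal} length among all short zero-sum subsequences of $S$, or work instead with the residue/$\zeta$-type invariants: apply the bound on $\zeta_j(G) = \D(G) + j - 1$ to find a zero-sum subsequence of $S$ whose length is congruent mod $n$ to a residue in a controlled window $[j, n]$, pick the window so that the length is forced to lie in $[i, n]$ outright, and otherwise (if the length is $\geq \D(G)+1$) recurse. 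Indeed, since $|S| = \D(G) + i = \zeta_{i+1}(G) + (n - 1) \cdot 0$... more precisely $|S| = \D(G)+i$; by Lemma~\ref{lem:zetaVals}, $|S| \geq \zeta_{i+1}(G)$ provided $i + 1 \leq n$, i.e. $i \leq n-1$ (which holds since $i \leq 2n - \D(G) \leq n$, with the boundary case $i = n$, forcing $\D(G) = n$, handled separately where everything is immediate). So $S$ contains a zero-sum subsequence $T$ whose length $t$ satisfies $t \equiv r \pmod n$ for some $r \in [i+1, n]$... hmm, I want $t \in [i, n]$. Let me instead use $\zeta_i$: $|S| = \D(G) + i > \D(G) + i - 1 = \zeta_i(G)$, so $S$ has a zero-sum subsequence $T$ with $|T| \equiv r \pmod n$, $r \in [i, n]$. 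Then either $|T| \leq n$, in which case $|T| \in [i,n]$ and we are done, or $|T| \geq n + i \geq \D(G) + 1$ (wait, need $n + i \geq \D(G)+1$, i.e. $i \geq \D(G) - n + 1$; this may fail). So in the bad case $|T| \geq n + i$ but possibly $|T| < \D(G)+1$; however $|T| \leq |S| = \D(G)+i$, and if $|T| \geq \D(G)+1$ we recurse via the inductive hypothesis on $T$ (noting $|T| = \D(G) + i'$ with $i' = |T| - \D(G) \leq i$, and if $i' < i$ strict induction applies, while $i' = i$ means $T = S$ up to the same length and we are back where we started). The genuinely delicate case is $n + i \leq |T| \leq \D(G)$, a nonempty range exactly when $i \leq \D(G) - n$; there I would iterate the $\zeta$ extraction \emph{inside} $T$, or equivalently run the induction on $i$ jointly with a secondary induction on $|S|$, peeling off short zero-sum pieces and tracking the running total of their lengths modulo $n$ to eventually land in $[i,n]$. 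I expect the bookkeeping in this last case — showing the accumulated length can always be steered into the interval $[i, n]$ rather than overshooting — to be the crux of the argument.
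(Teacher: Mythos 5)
Your setup---induction on $i$ with Corollary~\ref{thm:zssinzss} as the base case, extracting a zero-sum subsequence $T$ whose length is congruent mod $n$ to a residue in $[i,n]$ via Lemma~\ref{lem:zetaVals}, and splitting into the cases $|T|\le n$ versus $|T|\ge n+i$---is exactly the paper's framework. But the proposal stops short of a proof: you explicitly leave the case $n+i\le |T|\le \D(G)$ as unresolved ``bookkeeping,'' and your earlier attempt at gluing short pieces together founders, as you yourself note, on controlling the total length. The missing idea is to pass to the complement $T'$ of $T$ in $S$ and use the two-sided bound $|T'| = \D(G)+i-|T| \le (\D(G)+i)-(n+i) = \D(G)-n \le n-i$, where the last step uses $i\le 2n-\D(G)$. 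This single inequality closes everything. First, your ``delicate case'' $n+i\le|T|\le\D(G)$ is immediate: there $i\le |T'|\le n-i\le n$, so $T'$ itself is a short zero-sum subsequence of length at least $i$---no iteration or modular steering is needed. Second, if instead $|T'|<i$, then $|T|=\D(G)+(i-|T'|)>\D(G)$, so the inductive hypothesis with the strictly smaller parameter $i-|T'|$ applies to $T$ and yields a short zero-sum subsequence $U$ of $T$ with $|U|\ge i-|T'|$; if $|U|\ge i$ you are done, and otherwise $|U|\le i-1$, so $T'U$ is a zero-sum subsequence of $S$ with $i\le |T'|+|U|\le (n-i)+(i-1)=n-1$, hence short and of length at least $i$.

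One further repair is needed for the induction to terminate: you apply $\zeta_i$ to all of $S$ (noting $|S|>\zeta_i(G)$), which permits $T=S$, making $T'$ empty and the recursive call circular, since the parameter $i-|T'|$ would equal $i$. The paper avoids this by applying Lemma~\ref{lem:zetaVals} to $S$ with one element deleted, whose length is exactly $\zeta_i(G)=\D(G)+i-1$; this forces $|T|\le \D(G)+i-1$, hence $|T'|\ge 1$, and the induction parameter strictly decreases.
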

\begin{proof}
We use strong induction on $i$. The base case $i=1$ is Corollary~\ref{thm:zssinzss}. Assume the result is true for $1\leq i\leq j-1$, and take $i=j\leq 2\exp(G)-\D(G)$.

Let $n=\exp(G)$ and consider a zero-sum sequence $S$ of length $\D(G)+j$ over $G$. Let $S'$ be the subsequence of $S$ consisting of all but the last element, so $S'$ has length $|S'|=\D(G)+j-1$. By Lemma~\ref{lem:zetaVals}, $S'$ contains a zero-sum sequence $T$ whose length $|T|$ is congruent $\mod n$ to some residue between $j$ and $n$, inclusive. Since $\D(G)+j\leq 2n$, either $j\leq |T|\leq n$ or $n+j\leq |T|\leq 2n-1$. In the former case, $T$ is a short zero-sum subsequence of $S$ of length at least $j$, as desired. In the latter case, the complement $T'$ of $T$ in $S$ is a nonempty zero-sum subsequence of $S$ of length $|T'|\leq (\D(G)+j) - (n+j) = \D(G)-n\leq n-j$.

If $|T'|\geq j$, we again have a short zero-sum subsequence of $S$ of length at least $j$, as desired. Otherwise, $|T|=\D(G)+j-|T'|>\D(G)$. By the inductive hypothesis, $T$ contains a short zero-sum subsequence $U$ of length at least $j-|T'|$. If $|U|\geq j$, we are again done, so assume $j-|T'|\leq |U|\leq j-1$. Then
\[j\leq |T'|+|U|\leq n-1,\]
so $T'U$ is a short zero-sum subsequence of $S$ of length at least $j$, as desired. So the result holds for $i=j$ as well, and thus by induction holds for all $i\leq 2\exp(G)-\D(G)$.
\end{proof}

This gives the following extension of Theorem~\ref{thm:main} as an immediate corollary.

\begin{cor}
\label{cor:etaimain}
    Let $G$ be a finite abelian $p$-group with $\D(G)\leq 2\exp(G)-1$. Then for $1\leq i\leq \max\left(2\exp(G)-\D(G), \left\lfloor\frac{\exp(G)}{2}\right\rfloor + 1\right),$ we have
    $$\eta_i(G)=2\D(G)-\exp(G)+(i-1).$$ 
\end{cor}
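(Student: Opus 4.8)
The plan is to prove Corollary~\ref{cor:etaimain} by combining the two available lower bounds on $\eta_i(G)$ with an upper bound argument modeled on the proof of Theorem~\ref{thm:main}, now using Theorem~\ref{thm:zssin2n} in place of Corollary~\ref{thm:zssinzss}. Set $n=\exp(G)$. The lower bound $\eta_i(G)\geq 2\D(G)-n+(i-1)$ follows for all $i$ in the claimed range by combining $\eta(G)=2\D(G)-n$ (Theorem~\ref{thm:main}) with the inequality chain $\eta(G)=\eta_1(G)<\cdots<\eta_n(G)$ noted in the text, which gives $\eta_i(G)\geq \eta(G)+(i-1)$; in fact for $2\leq i\leq \lfloor n/2\rfloor+1$ Lemma~\ref{lem:etaihalf} already gives equality, so the only real work is the regime $\lfloor n/2\rfloor+1 < i \leq 2n-\D(G)$ (when that interval is nonempty, i.e. when $\D(G)$ is small relative to $n$).

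So the crux is the upper bound: I would show that any sequence $S$ over $G$ of length $\ell := 2\D(G)-n+(i-1)$ contains a zero-sum subsequence of length in $[i,n]$. First I'd check that $\ell = \zeta_{\D(G)-n+i}(G)$ via Lemma~\ref{lem:zetaVals}, which requires $1\leq \D(G)-n+i\leq n$; the upper inequality is exactly the constraint $i\leq 2n-\D(G)$, and the lower one holds since $i\geq 1$ and $\D(G)\geq n$. By the defining property of $\zeta$, $S$ contains a nonempty zero-sum subsequence $T$ whose length $t$ is congruent mod $n$ to a residue in $[\D(G)-n+i,\,n]$, so either $i\leq t\leq n$ or $\D(G)+i\leq t\leq 2n-1$ (using $\D(G)\leq 2n-1$ and $|S|\leq 2\D(G)-1 < 2n+\D(G)$ to bound the number of possible residue classes, so $t<2n$). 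In the first case $T$ itself is the desired subsequence. In the second case $t$ is a zero-sum length with $\D(G)+i \le t \le \D(G) + (2n-\D(G)) = 2n - (\D(G)-n) \le \D(G)+i$ only when... — more carefully, write $t = \D(G)+j$ with $i\leq j\leq 2n-1-\D(G)\leq 2n-\D(G)$, so Theorem~\ref{thm:zssin2n} applies to the zero-sum sequence $T$ (after possibly truncating $T$ down to length exactly $\D(G)+j'$ for the relevant $j'$ — one must be slightly careful that the hypothesis of Theorem~\ref{thm:zssin2n} is about sequences of length \emph{exactly} $\D(G)+j$) and yields a short zero-sum subsequence of $T$ of length at least $j\geq i$, hence of length in $[i,n]$, which lies in $S$. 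Either way $S$ has a zero-sum subsequence of length in $[i,n]$, so $\eta_i(G)\leq \ell$, matching the lower bound.

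The main obstacle I anticipate is bookkeeping around the two endpoints: (1) confirming that $t<2n$ so that only the two residue windows $[i,n]$ and $[\D(G)+i,2n-1]$ occur — this needs $|S| = 2\D(G)-n+i-1$ to be less than $2n + (\D(G)-n+i) = n+\D(G)+i$, i.e. $\D(G)<2n$, which is given; and (2) verifying that when $t\geq \D(G)+i$ the subsequence $T$ (or an appropriate sub-subsequence of it) has length exactly of the form $\D(G)+j$ with $i\leq j\leq 2n-\D(G)$ so that Theorem~\ref{thm:zssin2n} is directly applicable. For (2), note $t\leq 2n-1$ gives $j = t-\D(G)\leq 2n-1-\D(G)$; this is $\leq 2n-\D(G)$, so the hypothesis range of Theorem~\ref{thm:zssin2n} is respected exactly. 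The remaining point is purely organizational: the corollary claims the bound for $i$ up to $\max(2n-\D(G), \lfloor n/2\rfloor+1)$, so I would simply note that for $i\leq \lfloor n/2\rfloor+1$ the result is Lemma~\ref{lem:etaihalf} together with $\eta(G)=2\D(G)-n$, and for $i\leq 2n-\D(G)$ it is the argument above, and these two ranges cover everything up to the stated maximum.
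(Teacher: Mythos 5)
Your proposal is correct and relies on the same ingredients as the paper's proof: Theorem~\ref{thm:zssin2n} applied to a long zero-sum subsequence produced by the $\zeta$-machinery, plus Lemma~\ref{lem:etaihalf} and $\eta(G)=2\D(G)-n$ for the range $i\leq\lfloor n/2\rfloor+1$. The only difference is organizational: the paper establishes the upper bound solely at the endpoint $i=2n-\D(G)$, where the relevant $\zeta$-value is just $\s_{n\NN^+}(G)=\D(G)+n-1$ so the long subsequence has length exactly $n$ or $2n$, and then recovers all intermediate values from the strict chain $\eta_{i-1}(G)<\eta_i(G)$, whereas you prove each $i$ directly via $\zeta_{\D(G)-n+i}(G)$; your version also goes through (including the case $t=2n$, which your bound ``$t\leq 2n-1$'' momentarily excludes but which is harmless since $j=2n-\D(G)$ still lies in the hypothesis range of Theorem~\ref{thm:zssin2n}).
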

\begin{proof}
Let $n=\exp(G)$, and let $R$ be a sequence of length $\D(G)+n-1$ over $G$. Since $\D(G)+n-1<3n$, by Lemma~\ref{lem:snN}, $R$ contains a zero-sum subsequence $S$ of length $n$ or $2n$. If $|S|=2n$, then Theorem~\ref{thm:zssin2n} gives a short zero-sum subsequence $T$ of length at least $2n-\D(G)$. So, in either case, $R$ contains a short zero-sum subsequence of length at least $2n-\D(G)$, meaning $\eta_{2n-\D(G)}(G)\leq \D(G)+n-1$. Since $\eta(G)=2\D(G)-n$, the bounds $\eta_{i-1}(G)<\eta_i(G)$ yield the desired result for all $i\leq 2\exp(G)-\D(G)$.

The result for $i\leq \left\lfloor\frac{n}{2}\right\rfloor + 1$ is a direct consequence of Theorem~\ref{thm:main} and Lemma~\ref{lem:etaihalf}.
\end{proof}

The argument used in Theorem~3.1 of \cite{conjSrc} to prove that $\s(G)\leq \D(G)+2n-2$ for a $p$-group $G$ of large exponent relies on the fact that a sequence $S$ over $G$ of length at least $\D(G)+(n-2)+\ell$ contains either a zero-sum subsequence of length $n$, or a zero-sum subsequence of length $2n$ that itself contains a short zero-sum subsequence of length at least $\ell$. Our Theorem~\ref{thm:zssin2n} provides a strengthening of this result for $\ell=1$, giving a short zero-sum subsequence of length at least $(2n-1)-\D(G)+\ell=2n-\D(G)$ in the zero-sum subsequence of length $2n$. It is conceivable that this strengthening could generalize to larger values of $\ell$, as the following conjecture voices.

\begin{conj}
Let $G$ be a finite abelian $p$-group with $\exp(G)=n$ and $\D(G)\leq 2n-1$. Let $\ell\in [1,\D(G)+1-n]$, and let $S$ be a sequence over $G$ of length at least $\D(G)+(n-2)+\ell$. Then one of the following statements holds:
\begin{enumerate}
    \item $S$ has a zero-sum subsequence $B$ with $|B|=n$.
    \item $S$ has a zero-sum subsequence $B$ with $|B|=2n$ such that $B$ has a zero-sum subsequence $B'$ with $|B'|\in [(2n-1)-\D(G)+\ell,n-1]$.
\end{enumerate}
\end{conj}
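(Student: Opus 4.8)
The plan is to argue by induction on $\ell$, following the pattern of Theorem~\ref{thm:zssin2n} and of the proof of Corollary~\ref{cor:etaimain}. Write $d=\D(G)$; since lengthening $S$ only makes the conclusion easier, we may assume $|S|=d+(n-2)+\ell$ exactly. For the base case $\ell=1$, the sequence $S$ has length $d+n-1=\s_{n\NN^+}(G)$, so Lemma~\ref{lem:snN} yields a zero-sum subsequence $B$ with $|B|\in n\NN^+$, and $|S|<3n$ forces $|B|\in\{n,2n\}$. If $|B|=n$ we are in case (1); if $|B|=2n$, then Theorem~\ref{thm:zssin2n} applied with $i=2n-d$ (so that $d+i=2n=|B|$) produces a short zero-sum subsequence $B'\subseteq B$ with $|B'|\ge 2n-d=(2n-1)-d+1$, and then either $|B'|=n$ (case (1), taking $B'$ itself) or $|B'|\in[(2n-1)-d+1,\,n-1]$ (case (2)). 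This is essentially the argument already used to prove Corollary~\ref{cor:etaimain}.

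For the inductive step, assume the statement holds for $\ell-1$ and let $|S|=d+(n-2)+\ell$ with $\ell\ge 2$. Delete one element to obtain $S'$ of length $d+(n-2)+(\ell-1)$ and apply the inductive hypothesis. If $S'$ has an $n$-term zero-sum subsequence we are done; otherwise $S'$ has a zero-sum subsequence $B$ with $|B|=2n$ and a zero-sum subsequence $B'\subseteq B$ with $|B'|\in[(2n-1)-d+\ell-1,\,n-1]$. If $|B'|\ge(2n-1)-d+\ell$ this is already case (2) for $S$, so the only remaining situation is $|B'|=(2n-1)-d+\ell-1$, and the entire problem reduces to gaining a single unit: enlarging $B'$ within $B$ by one while keeping it short, or exhibiting a different $2n$-term zero-sum subsequence of $S$ with a longer short sub-subsequence, or producing an $n$-term zero-sum subsequence of $S$ outright. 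The natural levers are Theorem~\ref{thm:zssin2n} applied to $B$ (which only guarantees a short sub-ZSS of length $\ge 2n-d$, short of the target by $\ell-1$); the complement $B\setminus B'$, which is a zero-sum sequence of length $d-\ell+2\in[n+1,d]$; the leftover elements of $S\setminus B$ together with the deleted element; and re-extraction via Lemma~\ref{lem:zetaVals} and Lemma~\ref{lem:snN}. When $\ell$ is small relative to $d-n$ there is enough room that recombining a short sub-ZSS of $B\setminus B'$ with $B'$, or absorbing leftover elements, should close the gap, yielding an extension of Corollary~\ref{cor:etaimain}; this bookkeeping is the bulk of the routine work.

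The main obstacle is precisely this unit gain, and it cannot be removed entirely by the present tools: at the top of the range, $\ell=d+1-n$, the target interval $[(2n-1)-d+\ell,\,n-1]=[n,n-1]$ is empty, so the conjecture there asserts that every sequence over $G$ of length at least $2d-1$ contains a zero-sum subsequence of length exactly $n$, i.e.\ $\s(G)\le 2\D(G)-1$ --- the still-open second equality of Conjecture~\ref{conj:main}. Hence the conjecture is at least as hard as that equality, and a realistic program is to push the induction through for as wide a range of $\ell$ as the known short-zero-sum-subsequence results permit (thereby extending Corollary~\ref{cor:etaimain} to further values of $\eta_i(G)$), and to isolate the structural input --- presumably a description of sequences over a $p$-group of large exponent that contain no $n$-term zero-sum subsequence --- that would be needed to reach $\ell=d+1-n$.
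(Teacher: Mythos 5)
This statement is left as an open conjecture in the paper --- no proof is given, and the paper itself notes that the case $\ell=\D(G)+1-n$ is exactly the still-unproved bound $\s(G)\leq 2\D(G)-1$ from Conjecture~\ref{conj:main}. Your proposal does not prove it either, and to your credit you say so explicitly. Your base case $\ell=1$ is correct and is essentially verbatim the argument in the proof of Corollary~\ref{cor:etaimain}: Lemma~\ref{lem:snN} extracts a zero-sum subsequence of length $n$ or $2n$, and Theorem~\ref{thm:zssin2n} with $i=2n-\D(G)$ handles the length-$2n$ case. Your observation that the target interval $[(2n-1)-\D(G)+\ell,\,n-1]$ becomes empty at $\ell=\D(G)+1-n$, so that the conjecture there collapses to $\s(G)\leq 2\D(G)-1$, is also correct and matches the paper's own motivation for stating the conjecture.

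The genuine gap is the inductive step. Deleting one element and invoking the hypothesis for $\ell-1$ leaves you needing to upgrade a zero-sum subsequence $B'\subseteq B$ of length exactly $(2n-1)-\D(G)+\ell-1$ by a single unit, and at that point you list candidate tools (the complement $B\setminus B'$, the leftover elements, re-extraction via Lemma~\ref{lem:zetaVals}) without carrying out any of them; "should close the gap" is not an argument, and the combination $T'U$ trick from Theorem~\ref{thm:zssin2n} does not obviously transfer because here the required lower bound on $|B'|$ grows with $\ell$ while Theorem~\ref{thm:zssin2n} applied to $B$ only ever yields length $\geq 2n-\D(G)$, a deficit of $\ell-1$ that is independent of which element you deleted. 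Moreover, since the endpoint of the induction is equivalent to an open problem, no induction assembled purely from the paper's lemmas can succeed for the full stated range of $\ell$; at best one could hope to push it through for an initial segment of $\ell$ values, which is the realistic program you describe but do not execute.
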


This conjecture, if true, would yield the desired bound of $\s(G)\leq \D(G)+(n-2)+(\D(G)-n+1)=2\D(G)-1$. Perhaps in this or some other way, the argument used in \cite{conjSrc}, extended by the arguments we use, could lead to the full resolution of Conjecture~\ref{conj:main}.

\section*{Acknowledgements}
This research was conducted at the University of Minnesota Duluth REU and was supported by NSF grant 1358659 and NSA grant H98230-16-1-0026. The author thanks Joe Gallian for suggesting the problem and Levent Alpoge, Alfred Geroldinger, Wolfgang Schmid, and Qinghai Zhong for helpful comments on the manuscript.

\bibliographystyle{acm}

\bibliography{main}

\begin{thebibliography}{10}

\bibitem{zssBounds}
{\sc Edel, Y., Elsholtz, C., Geroldinger, A., Kubertin, S., and Rackham, L.}
\newblock Zero-sum problems in finite abelian groups and affine caps.
\newblock {\em Q. J. Math. 58}, 2 (2007), 159--186.

\bibitem{zssinzss}
{\sc Fan, Y., Gao, W., Wang, G., Zhong, Q., and Zhuang, J.}
\newblock On short zero-sum subsequences of zero-sum sequences.
\newblock {\em Electron. J. Combin. 19}, 3 (2012), P31.

\bibitem{restrSize2}
{\sc Gao, W.}
\newblock On zero-sum subsequences of restricted size {II}.
\newblock {\em Discrete Mathematics 271}, 1 (2003), 51--59.

\bibitem{zssSurvey}
{\sc Gao, W., and Geroldinger, A.}
\newblock Zero-sum problems in finite abelian groups: a survey.
\newblock {\em Expo. Math. 24}, 4 (2006), 337--369.

\bibitem{main}
{\sc Gao, W., Han, D., and Zhang, H.}
\newblock The {EGZ}-constant and short zero-sum sequences over finite abelian
  groups.
\newblock {\em J. Number Theory 162\/} (2016), 601--613.

\bibitem{congCond}
{\sc Geroldinger, A., Grynkiewicz, D.~J., and Schmid, W.~A.}
\newblock Zero-sum problems with congruence conditions.
\newblock {\em Acta Math. Hungar. 131}, 4 (2011), 323--345.

\bibitem{nonunique}
{\sc Geroldinger, A., and Halter-Koch, F.}
\newblock {\em {N}on-unique {F}actorizations: {A}lgebraic, {C}ombinatorial and
  {A}nalytic {T}heory}.
\newblock Chapman \& Hall/CRC Pure and Applied Mathematics. CRC Press, 2006.

\bibitem{addgpNonunique}
{\sc Geroldinger, A., and Ruzsa, I.}
\newblock {\em {C}ombinatorial {N}umber {T}heory and {A}dditive {G}roup
  {T}heory}.
\newblock Advanced Courses in Mathematics - CRM Barcelona. Birkh{\"a}user
  Basel, 2009.

\bibitem{olson}
{\sc Olson, J.~E.}
\newblock A combinatorial problem on finite abelian groups, {II}.
\newblock {\em J. Number Theory 1}, 2 (1969), 195--199.

\bibitem{conjSrc}
{\sc Schmid, W.~A., and Zhuang, J.}
\newblock On short zero-sum subsequences over p-groups.
\newblock {\em Ars Combin. 95\/} (2010), 343--352.

\bibitem{zsramsey}
{\sc Zhang, H., and Wang, G.}
\newblock On the {E}rd{\H{o}}s--{G}inzburg--{Z}iv invariant and zero-sum
  {R}amsey number for intersecting families.
\newblock {\em Int. J. Number Theory 10}, 07 (2014), 1637--1647.

\end{thebibliography}

\end{document}